\numberwithin{equation}{section}
\newtheorem{thm}{Theorem}[section]
\newtheorem{lem}[thm]{Lemma}
\newtheorem{cor}[thm]{Corollary}
\newtheorem{prop}[thm]{Proposition}
\newtheorem*{thm*}{Theorem}
\newtheorem*{claim}{Claim}
\theoremstyle{definition}
\newtheorem{defn}[thm]{Definition}
\theoremstyle{remark}
\newtheorem{remark}[thm]{Remark}
\renewcommand{\restriction}{\mathord{\upharpoonright}}
\DeclareMathOperator{\Leb}{Leb}
\DeclareMathOperator{\Lyap}{Lyap}
\begin{document}

\title{Schmidt's Game and Nonuniformly Expanding Interval Maps}
\author{Jason Duvall}
\address{Dept.\ of Mathematics, University of Houston, Houston, TX 77204}
\email{jrduvall@math.uh.edu}
\thanks{Keywords: Schmidt's game, Manneville--Pomeau maps, nondense orbit, nonuniform hyperbolicity, Hausdorff dimension}
\thanks{Mathematics Subject Classification numbers: 37D25, 11K55}

\begin{abstract}
	We study a class of two-branched nonuniformly expanding interval maps with a neutral fixed point at 0, a class that includes Manneville--Pomeau maps.  We prove that the set of points whose forward orbits miss an interval with left endpoint 0 is strong winning for Schmidt's game.  Strong winning sets are dense, have full Hausdorff dimension, and satisfy a countable intersection property.  Similar results were known for certain expanding maps, but these did not address the nonuniformly expanding case.  Our analysis is complicated by the presence of infinite distortion and unbounded geometry.
\end{abstract}

\maketitle

\section{Introduction and statement of results} \label{sec:intro}
Let $X$ be a compact metric space, $f$ a countably-branched piecewise-continuous map, and $\mu$ an $f$-invariant measure on $X$.  There are broad conditions under which $\mu$-almost every point in $X$ has dense forward orbit under $f$.  This is the case, for example, if $\mu$ is ergodic and fully supported on $X$.  The ``exceptional sets'' of points with nondense orbits, despite being $\mu$-null, are nevertheless often large in a different sense.  In particular they are often winning for Schmidt's game, which implies that they are dense in $X$, have full Hausdorff dimension (if $X \subset \mathbb{R}^n$), and remain winning when intersected with countably many suitable winning sets in $X$ (see Theorem \ref{thm:schmidt} for a precise result and \S\ref{sec:Schmidt} for definitions).  Examples of systems possessing winning exceptional sets include surjective endomorphisms of the torus \cite{MR2818688,MR980795}, beta transformations \cite{MR2660561,MR3206688}, the Gauss map \cite{MR195595}, and $C^2$ (uniformly) expanding self-maps of the circle \cite{MR2480100}.

In this article we add to this list a certain collection of nonuniformly expanding interval maps possessing a neutral fixed point at 0.  Throughout this paper we assume that $f \colon [0,1] \to [0,1]$ is a two-branched interval map and that there exists $r_1 \in (0,1)$ such that the following conditions hold (using one-sided derivatives as needed):
\begin{enumerate}
	\item \label{condfirst} $f((0,r_1)) = f((r_1,1)) = (0,1)$;
	\item \label{cond2} $f$ is $C^1$ on $[0,r_1)$ and $C^2$ on $(0,r_1) \cup (r_1,1]$;
	\item \label{cond3} $f(0) = f(r_1) = 0$, $f'(0) = 1$, and $\lvert f' \rvert > 1$ on $(0,1]$ 
	(hence $\inf_{x \in [\epsilon,1]} \, \lvert f' \rvert > 1$ for all $\epsilon > 0$);
	\item \label{cond4} $\sup_{x \in [0,1]} \, \lvert f'(x) \rvert < \infty$ and 
	$\sup_{x \in (r_1,1]} \, \lvert f''(x) \rvert < \infty$;
	\item \label{condlast} there exist $C \geq 1$ and $\gamma > 0$ such that for $x \in (0,r_1)$,
	\begin{equation*}
		C^{-1} \leq \frac{f''(x)}{x^{\gamma-1}} \leq C.
	\end{equation*}
\end{enumerate}

If one wishes to relax assumption (\ref{cond3}) by allowing $\lvert f'_+ (r_1) \rvert = 1$, all results in this article still hold as long as the induced first return map to $[r_1,1]$ is uniformly expanding (see the next section for the definition of the first return map).
\begin{remark}
	Conditions (\ref{cond2}) and (\ref{cond3}) (even a weakened condition (\ref{cond3})) imply that $f$ is monotone on $(r_1,1]$.  Henceforth, until the final section, we assume that $f$ is increasing on $(r_1,1]$ in order to simplify the presentation.  In \S\ref{sec:decreasing} we detail the modifications to our proofs necessary in order to accommodate the case when $f$ is decreasing on $(r_1,1]$.
\end{remark}

The standard example from this class is the Manneville--Pomeau map
\begin{equation*}
f(x) = \begin{cases}
x+x^{1+\gamma} & \text{if } \, 0 \leq x < r_1 \\
x+x^{1+\gamma} - 1 & \text{if } \, r_1 \leq x \leq 1,
\end{cases}
\end{equation*}
where $r_1$ is the unique solution of $x+x^{1+\gamma} = 1$ (see Figure \ref{fig:MP}).  Our main result is the following theorem, which we prove in \S\ref{sec:corollary}.

\begin{thm} \label{thm:maintheoremf}
	For $f$ satisfying conditions (\ref{condfirst})--(\ref{condlast}) above, the set
	\begin{equation*}
	\mathcal{E}_f := \{ x \in [ 0,1 ] \colon  [ 0,\epsilon  ) \cap \{ f^n x \}_{n \geq 0} = \emptyset \textrm{ for some } \epsilon > 0 \}
	\end{equation*}
	is strong winning for Schmidt's game.
\end{thm}

\begin{remark}
	As the proof of Theorem \ref{thm:maintheoremf} will demonstrate, the strong-winning dimension of $\mathcal{E}_f$, i.e., the supremum of all $\alpha$ for which $\mathcal{E}_f$ is $\alpha$-strong winning, depends on $\gamma$.  
\end{remark}

\begin{remark}
	It is well-known that $\Leb ( \mathcal{E}_f ) = 0$.  Indeed, we may express $\mathcal{E}_f$  as a countable union of nested Cantor sets:
	\begin{equation*}
	\mathcal{E}_f = \bigcup_{n=1}^\infty \mathcal{C}_n, \quad \mathcal{C}_n := \bigcap_{k=0}^\infty f^{-k} ([ r_n, 1 ]).
	\end{equation*}
	(The sequence $\{r_n\}$ is introduced in Definition \ref{def:rn}.)  The sets $\mathcal{C}_n$ are $f$-invariant, hence $\tilde{f}$-invariant, where $\tilde{f}$ is any expanding $C^2$ circle map that agrees with $f$ on $[r_n,1]$.  That $\Leb ( \mathcal{C}_n ) = 0$ can be deduced from the fact that $\tilde{f}$ preserves an ergodic invariant measure equivalent to Lebesgue
	measure \cite[Chapter 3, Theorem III.1.1]{MR889254}.  A more elementary proof that $\Leb ( \mathcal{C}_n ) = 0$ involves a simple distortion estimate, but the author is unaware of a published reference for this approach.
\end{remark}

One consequence of Theorem \ref{thm:maintheoremf} concerns the set of points having positive lower Lyapunov exponent for $f$.  Recall that for $x \in [ 0,1 ]$ the lower Lyapunov exponent of $x$ is the number
\begin{equation*}
\underbar{L}(x) := \liminf_{n \to \infty} \bigg( \frac{1}{n} \log \big\lvert ( f^n )' ( x ) \big\rvert \bigg)
\end{equation*}
(using one-sided derivatives as necessary).  Now define
\begin{equation*}
S := \{ x \in [ 0,1 ] \colon \underbar{L}(x) > 0 \}.
\end{equation*}
It is easy to see that $\mathcal{E}_f \subset S$.  Indeed, if $x \in \mathcal{E}_f$, find $\epsilon > 0$ such that the orbit of $x$ under $f$ avoids $[ 0, \epsilon )$.  Then
\begin{equation*}
\underbar{L}(x) \geq \liminf_{n \to \infty} \Biggl( \frac{1}{n} \sum_{k=0}^{n-1} \log \big\lvert f' \big( f^k x \big) \big\rvert \Biggr) \geq \log \Big( \inf_{x \in [\epsilon,1]} \big\lvert f'(x) \big\rvert \Big) > 0.
\end{equation*}
The following result is now immediate.

\begin{cor} \label{cor:Lyap}
	The set of points with positive lower Lyapunov exponent for $f$ is strong winning for Schmidt's game.
\end{cor}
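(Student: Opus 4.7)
The plan is to deduce the corollary directly from Theorem \ref{thm:maintheoremf}, by establishing the containment $\mathcal{E}_f \subseteq S$ and then invoking the elementary fact that the strong winning property is preserved under passing to supersets.

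First I would record the derivative: on each branch of $f$ we have $f'(x) = 1 + (1+\gamma) x^\gamma$, and this formula agrees from both sides at the branch point $r_1$, so the one-sided derivatives mentioned in the paper cause no difficulty. In particular, for any $\epsilon > 0$,
\begin{equation*}
f'(y) \geq c_\epsilon := 1 + (1+\gamma)\,\epsilon^\gamma > 1 \quad \text{whenever } y \geq \epsilon,
\end{equation*}
so the only obstruction to uniform expansion along an orbit is closeness to the neutral fixed point $0$.

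Second, fix $x \in \mathcal{E}_f$ and choose $\epsilon > 0$ such that $f^n x \geq \epsilon$ for all $n \geq 0$. The chain rule gives $\log \bigl|(f^k)'(x)\bigr| = \sum_{j=0}^{k-1} \log \bigl|f'(f^j x)\bigr| \geq k \log c_\epsilon$, whence
\begin{equation*}
\frac{1}{n}\sum_{k=0}^{n-1} \log \bigl|(f^k)'(x)\bigr| \geq \frac{\log c_\epsilon}{n} \cdot \frac{n(n-1)}{2} = \frac{(n-1)\log c_\epsilon}{2},
\end{equation*}
which tends to $+\infty$ with $n$. Hence the lower Lyapunov exponent of $x$ is positive (in fact $+\infty$), and $\mathcal{E}_f \subseteq S$.

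Finally I would note that $\alpha$-strong winning is trivially closed under passing to supersets: any strategy that forces the nested intersection point into $\mathcal{E}_f$ automatically forces it into the larger set $S$. Combined with Theorem \ref{thm:maintheoremf}, this yields the corollary. There is essentially no obstacle here, since all of the dynamical content has been packaged into Theorem \ref{thm:maintheoremf}; what remains is only the explicit derivative estimate, which is immediate from the Manneville--Pomeau formula.
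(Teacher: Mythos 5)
Your proposal is correct and follows essentially the same route as the paper: one shows $\mathcal{E}_f \subseteq S$ by bounding $f'$ from below by $f'(\epsilon) > 1$ along any orbit avoiding $\left[0,\epsilon\right)$, and then invokes Theorem \ref{thm:maintheoremf} together with the trivial fact that a superset of an $\alpha$-strong winning set is $\alpha$-strong winning. Your extra observations (the one-sided derivatives agreeing at $r_1$, and the exponent in fact being infinite under the displayed definition) are harmless refinements of the same argument.
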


It was known \cite{MR2505322,MR1764931} that $S$ has full Hausdorff dimension for all values of $\gamma > 0$; Corollary \ref{cor:Lyap} greatly strengthens this.  In the case that $\gamma < 1$, $f$ preserves a fully supported ergodic absolutely continuous (with respect to Lebesgue) measure $\mu$, so that Lebesgue-almost every point has positive lower Lyapunov exponent since
\begin{equation*}
\Lyap(\mu) := \int_0^1 \log \, \lvert f' \rvert \, d\mu > 0
\end{equation*}
in this case (see \cite{MR599464} and references therein).  Note that even sets with full Lebesgue measure are not necessarily winning.  This is because the complement of a winning set is never winning by Theorem \ref{thm:schmidt} below, and there exist Lebesgue-null winning sets; an example is the set of reals not normal to a given base \cite{MR195595}.  When $\gamma \geq 1$, however, $\Leb(S) = 0$ \cite{MR599464}, and so Corollary \ref{cor:Lyap} is the strongest available result concerning the ``largeness'' of the set $S$ in this case, and gives another example of a Lebesgue-null winning set.

\begin{figure}
	\centering
	\begin{minipage}{0.49\textwidth}
		\centering
		\includegraphics[width=.85\textwidth]{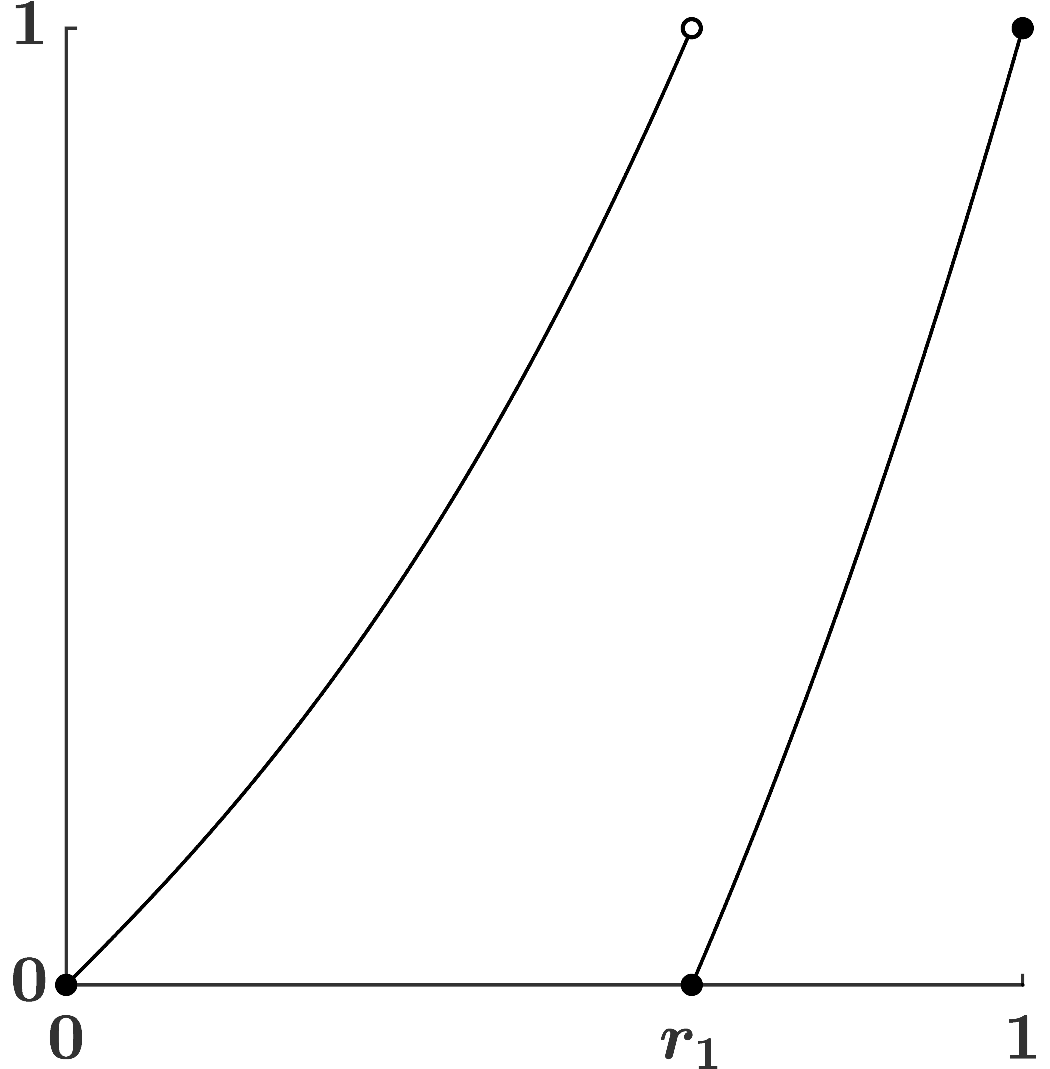}
		\captionsetup{width=.85\linewidth}
		\caption{The graph of the Manneville--Pomeau map $f$ with $\gamma = 1.5$.}
		\label{fig:MP}
	\end{minipage}\hfill
	\begin{minipage}{0.49\textwidth}
		\centering
		\includegraphics[width=.85\textwidth]{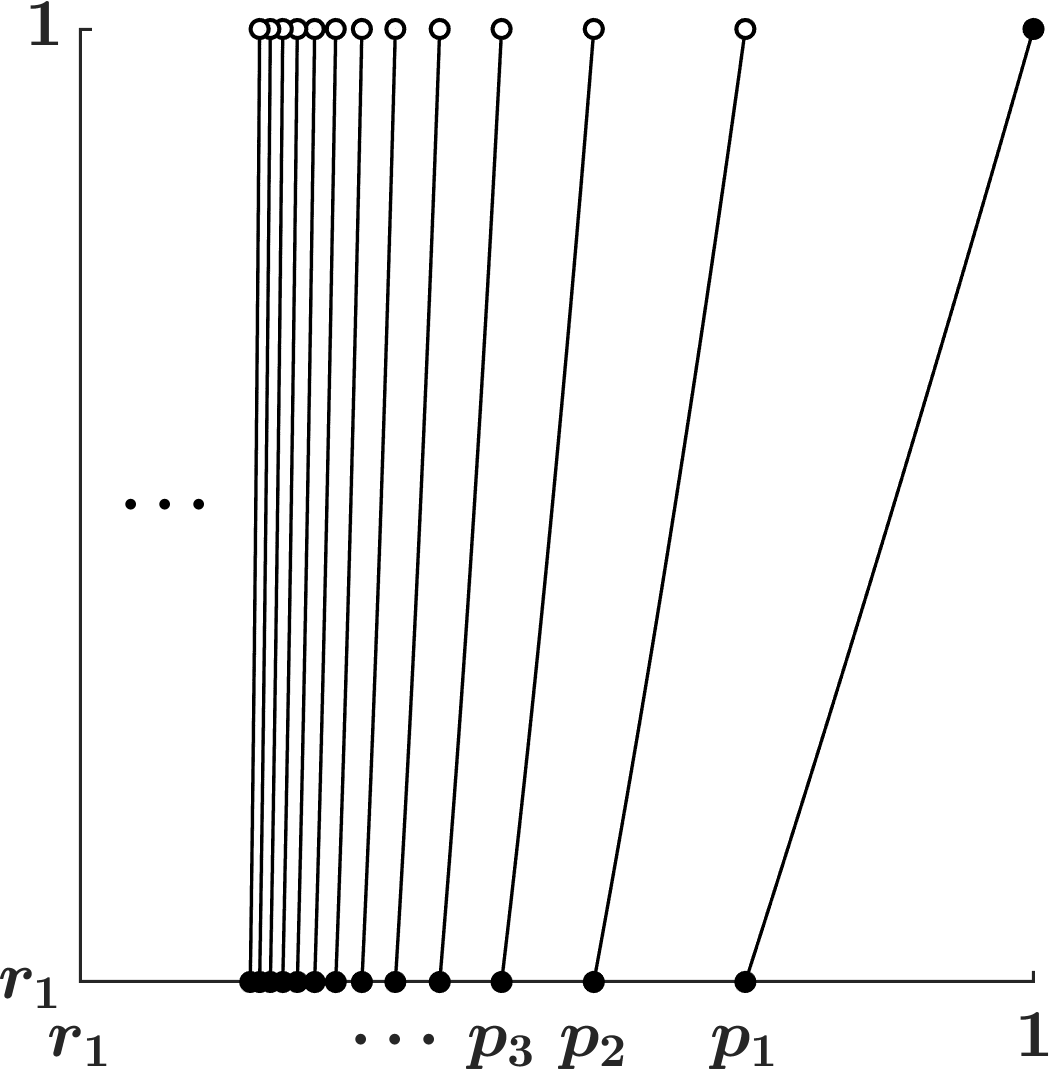}
		\captionsetup{width=.85\linewidth}
		\caption{The graph of $F$, the first return map to $[ r_1,1 ]$ induced by $f$ with $\gamma = 1.5$.}
		\label{fig:MPinduced}
	\end{minipage}
\end{figure}

\section{Method of proof}
The primary difficulty in studying $f$ is the nonuniformity of expansion near the indifferent fixed point 0, which gives rise to infinite distortion.  The map $f$ also exhibits unbounded geometry, by which we mean that the ratio of the longest to the shortest Markov partition element of successive generations tends to infinity.  We address the problem of infinite distortion by inducing $f$ on $[ r_1,1 ]$ to get a uniformly expanding first return map $F$.  This induced map satisfies a bounded distortion estimate, which is a key property of expanding systems that features prominently in the articles mentioned above \cite{MR2818688,MR980795,MR2660561,MR3206688,MR195595,MR2480100}.  The issue of unbounded geometry is handled using the notion of ``commensurate,'' introduced in \cite{MR3021798}.

The bulk of this paper involves analyzing the induced first return map
\begin{equation*}
	F \colon [ r_1, 1 ] \to [ r_1, 1 ]	
\end{equation*}
given by the rule
\begin{equation*}
F x := f^{\tau (x)} (x), \quad \tau (x) := \min \{ n \geq 1 \colon f^n x \in [ r_1,1 ] \}.
\end{equation*}
See Figure \ref{fig:MPinduced}.  We will show that Theorem \ref{thm:maintheoremf} is a straightforward consequence of the following analogous result for $F$, which we prove in \S\ref{sec:maintheoremF}:

\begin{thm} \label{thm:maintheoremF}
	With $F$ defined as above, the set
	\begin{equation*}
	\mathcal{E}_F := \{ x \in [ r_1,1 ] \colon [ r_1,r_1+\epsilon ) \cap \{ F^n x \}_{n \geq 0} = \emptyset \textrm{ for some } \epsilon > 0 \}
	\end{equation*}
	is strong winning for Schmidt's game.
\end{thm}

\begin{remark}
	Our proof of Theorem \ref{thm:maintheoremF} works for any map topologically conjugate to $F$ and satisfying the estimates concerning the Markov structure of $F$ in Proposition \ref{prop:C5}.
\end{remark}

In proving Theorem \ref{thm:maintheoremF} we follow the approach of Mance and Tseng in \cite{MR3021798}.  In that article the authors studied L\"uroth expansions, whose associated dynamical system is piecewise linear.  This linear structure permitted a precise computation of the lengths of intervals in the natural Markov partition.  In this paper we cannot obtain closed-form expressions for these lengths; instead we use estimates (Proposition \ref{prop:C5}) derived from a distortion result (Proposition \ref{prop:distF}).

We note that in \cite{MR3206688} Hu and Yu considered the class of piecewise locally $C^{1+\delta}$ expanding maps, a class that includes the Gauss map.  At first glance the induced map $F$ looks quite similar to the Gauss map; however, the authors in \cite{MR3206688} required a H\"older-type distortion estimate that $F$ does not satisfy.

\section{Schmidt's Game} \label{sec:Schmidt}
We describe a simplified version of a set-theoretic game introduced by Schmidt in \cite{MR195595}.  The game is played on the unit interval $[ 0,1 ]$.  Fix two constants $\alpha, \beta \in ( 0,1 )$ and a set $S \subset [ 0,1 ]$.  Two players, Alice and Bob, alternately choose nested closed intervals $B_1 \supset A_1 \supset B_2 \supset A_2 \supset \dots$ with Bob choosing first.  These intervals must satisfy the relations $\lvert B_{n+1} \rvert = \beta \lvert A_n \rvert$ and $\lvert A_n \rvert = \alpha \lvert B_n \rvert$ for all $n \in \mathbb{N}$ ($\lvert B_1 \rvert$ is arbitrary).  Then $\bigcap A_n = \bigcap B_n$ consists of a single point, $\omega$.  Alice wins the game if and only if $\omega \in S$.

If Alice has a winning strategy by which she can win regardless of Bob's choices, $S$ is said to be \textit{$( \alpha, \beta )$-winning}.  $S$ is called \textit{$\alpha$-winning} if it is $( \alpha, \beta )$-winning for all $\beta \in ( 0,1 )$.  $S$ is called \textit{winning} if it is $\alpha$-winning for some $\alpha \in ( 0,1 )$.  The following result lists important properties of winning sets; the proof may be found in \cite{MR195595}.

\begin{thm} \label{thm:schmidt}
	A winning set in $[ 0,1 ]$ is dense, uncountable, and has full Hausdorff dimension.  A countable intersection of $\alpha$-winning sets is $\alpha$-winning.  A co-countable subset of an $\alpha$-winning set is $\alpha$-winning.
\end{thm}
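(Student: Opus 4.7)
The plan is to prove each assertion by constructing an explicit strategy for Alice or Bob. The density and uncountability statements are immediate consequences of the fact that each player has room to maneuver at every stage; the countable intersection property is a classical interleaving argument; cocountability then follows easily; and the full Hausdorff dimension conclusion, which requires building a self-similar Cantor subset of $S$, is the technical heart of the theorem.

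For density, given a nonempty open $U \subset [0,1]$ Bob opens with $B_1 \subset U$, after which Alice's winning strategy produces $\omega \in S \cap U$. For uncountability, observe that since $\alpha, \beta < 1$ the player about to move always has at least two disjoint legal positions (for example, flush-left and flush-right inside the previous interval), and can therefore avoid any single prescribed point at each step. Thus if $S = \{s_n\}$ were countable and $\alpha$-winning, then for any fixed $\beta$ Bob could play $B_{n+1}$ so as to avoid $s_n$, forcing $\omega \notin S$ and contradicting Alice's winning strategy.

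For countable intersections, suppose each $S_k$ is $\alpha$-winning via a strategy $\sigma_k$ valid against arbitrary $\beta'$. In an $(\alpha,\beta)$-game for $\bigcap_k S_k$, Alice uses the classical coarsening/interleaving trick: she partitions her move indices into disjoint infinite blocks $I_1, I_2, \ldots$ and on indices in $I_k$ executes $\sigma_k$, lumping the intervening play between consecutive $I_k$-moves into a single effective ``Bob response'' at some ratio $\beta'_k \in (0,1)$. Since each $\sigma_k$ wins against every such $\beta'_k$, the limit point lies in every $S_k$. Cocountability of $S' = S \setminus T$ with $T = \{t_n\}$ countable then follows: each singleton complement $[0,1] \setminus \{t_n\}$ is $\alpha$-winning (Alice repeatedly plays $A_n$ as far as possible from $t_n$, pushing it out of the current interval within finitely many moves), so by the countable intersection property just proven, $S \cap \bigcap_n ([0,1] \setminus \{t_n\}) = S'$ is $\alpha$-winning.

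The main obstacle is the full Hausdorff dimension claim. Given that $S$ is $\alpha$-winning with strategy $\sigma$, fix a small $\beta \in (0,1)$ and build a Cantor tree as follows: at each node, Alice plays $\sigma$ to produce $A_n$; in place of Bob, one branches over a family of $N = N(\alpha,\beta)$ pairwise-disjoint legal $B_{n+1} \subset A_n$ of length $\beta|A_n|$, which can be arranged with $N$ of order $1/\beta$ by interval packing. Every infinite branch is a legal play against $\sigma$ and hence limits to a point of $S$, so the resulting Cantor set $K \subset S$ has $N$ children per node and per-level contraction $\alpha\beta$. A standard Moran/mass-distribution estimate then yields
\[
\dim_H S \;\geq\; \dim_H K \;\geq\; \frac{\log N(\alpha,\beta)}{-\log(\alpha\beta)} \;\longrightarrow\; 1 \quad \text{as } \beta \to 0^+,
\]
so $\dim_H S = 1$. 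The care required here is in verifying the branching count $N \gtrsim 1/\beta$ uniformly and in applying the mass-distribution principle to the (not-quite-self-similar) Cantor set so produced.
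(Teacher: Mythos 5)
Your reduction of the cocountable statement contains a genuine error. You argue that each set $[0,1]\setminus\{t\}$ is $\alpha$-winning for every $\alpha\in(0,1)$ because Alice can ``push $t$ out of the current interval within finitely many moves.'' This is false once $\alpha>\frac{1}{2}$: whenever $2\alpha\geq 1+\alpha\beta$, Bob can keep his interval centred at $t$ forever. Concretely, take $\alpha=\frac{2}{3}$, $\beta=\frac{1}{2}$, and let Bob centre $B_n$ at $t$. Every legal $A_n\subset B_n$ with $\left|A_n\right|=\frac{2}{3}\left|B_n\right|$ contains the middle third of $B_n$, which is exactly the interval of length $\beta\left|A_n\right|=\frac{1}{3}\left|B_n\right|$ centred at $t$; so Bob may recentre $B_{n+1}$ at $t$, and the outcome is $\omega=t$. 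Hence $[0,1]\setminus\{t\}$ is not $\left(\frac{2}{3},\frac{1}{2}\right)$-winning, while $[0,1]$ is trivially $\alpha$-winning for every $\alpha$, so the cocountable assertion cannot be proved verbatim for $\alpha>\frac{1}{2}$ --- your strategy for Alice does not exist, and no other one does either. The correct statement carries a restriction such as $\alpha\leq\frac{1}{2}$ (then Alice really can dodge any prescribed point within at most two of her turns), which is precisely the form in which the paper invokes the property in \S\ref{sec:maintheoremF}; note that Theorem \ref{thm:schmidt} is not proved in the paper at all but cited to \cite{MR195595}, and its last sentence should be read with that caveat.

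Two smaller points. In the countable-intersection argument, ``disjoint infinite blocks $I_k$'' is not enough as stated: the strategy $\sigma_k$ is only guaranteed against a \emph{fixed} ratio, so you must choose the blocks with constant gaps, e.g.\ $I_k=\{n\colon n\equiv 2^{k-1}\pmod{2^k}\}$, so that the lumped ``Bob response'' seen by $\sigma_k$ always has the single ratio $\beta_k'=\beta\left(\alpha\beta\right)^{2^k-1}$; with irregular blocks the effective ratio varies from round to round and $(\alpha,\beta')$-winning for each fixed $\beta'$ does not directly supply a strategy. This is exactly how Schmidt's own proof proceeds and is easily incorporated. Likewise, in the uncountability argument the ``two disjoint legal positions'' claim fails for $\beta\geq\frac{1}{2}$; the argument survives because Bob is free to fix some $\beta<\frac{1}{2}$ (as $S$ is assumed $\alpha$-winning for all $\beta$), but it should be stated that way. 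The density argument and the Cantor-tree/Moran lower bound for the Hausdorff dimension are the standard ones and are fine in outline, provided you carry out the separation bookkeeping you flag (e.g.\ pack roughly $1/(2\beta)$ of Bob's intervals with gaps of length $\beta\left|A_n\right|$ so the mass distribution principle applies).
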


In \cite{MR2720230} McMullen introduced a modification of Schmidt's game in which the length restrictions are loosened to $\lvert B_{n+1} \rvert \geq \beta \lvert A_n \rvert$ and $\lvert A_n \rvert \geq \alpha \lvert B_n \rvert$.  This results in \textit{strong winning} sets.  It is easy to see that strong winning sets are winning.  McMullen showed that Theorem \ref{thm:schmidt} remains true when ``winning'' is replaced with ``strong winning'' and that the strong winning property is preserved under quasisymmetric homeomorphisms \cite[Theorem 1.1]{MR2720230}, which is not generally true of the winning property \cite[Theorem 1.2]{MR2720230}.

\section{Proofs of Preliminary Results} \label{sec:minorresults}
\subsection{Notation}
Let $B \subset [ 0,1 ]$ be a closed interval.  The expression $[B)$ denotes the interior of $B$ union its left endpoint; $(B]$ is similarly defined.  The left and right endpoints of $B$ are denoted by $\partial^\ell B$ and $\partial^r B$, respectively.  The notations $\overline{B}$ and $B^\circ$ denote the closure and interior of $B$, respectively.  The diameter of $B$ is represented by $\lvert B \rvert$, and we call $B$ \textit{nontrivial} if $0 < \lvert B \rvert < 1$.  Henceforth ``closed interval'' shall be understood to mean ``nontrivial closed interval''.

\subsection{Technical results}
\begin{defn}[The sequence $\{ r_n \}_{n=0}^\infty$] \label{def:rn}
	Define $\{ r_n \}_{n=0}^\infty \subset ( 0,1 ]$ recursively by $r_0 = 1$ and $\{ r_{n+1} \} = f^{-1} ( r_n ) \cap ( 0,r_n )$; thus $r_n \searrow 0$.
\end{defn}

\begin{defn}[The sequence $\{ p_n \}_{n=0}^\infty$] \label{def:pn}
	Define $\{ p_n \}_{n=0}^\infty \subset ( r_1,1 ]$ by $p_0 = 1$ and $\{ p_n \} := f^{-1} ( r_n ) \cap ( r_1,1 )$ for $n \geq 1$; thus $p_n \searrow r_1$.
\end{defn}

The asymptotics of these sequences will play a crucial role.  Proofs of the next two results may be found in \S6.2 of \cite{MR1750438}.  Note that in \S6 of that paper, Young imposes slightly more restrictive conditions on $f$ (continuity of $f''$ at $r_1$); however, her proofs of the following two theorems hold in the present setting.

\begin{thm}[The asymptotics of $\{ r_n \}_{n=0}^\infty$] \label{thm:rnyoung}
	There exists a constant $C_1 > 1$ such that for all $n \in \mathbb{N}$,
	\begin{gather*}
	C_1^{-1} n^{-1/\gamma} \leq r_n \leq C_1 n^{-1/\gamma}, \\
	C_1^{-1} n^{-1-1/\gamma} \leq r_{n-1}-r_n \leq C_1 n^{-1-1/\gamma}.
	\end{gather*}
\end{thm}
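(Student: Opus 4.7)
The plan is to convert the implicit recursion $r_n - r_{n+1} = r_{n+1}^{1+\gamma}$, which follows from $r_{n+1} \in (0, r_n)$ and the definition $f(x) = x + x^{1+\gamma}$ on $[0, r_1)$, into a nearly arithmetic progression via a change of variable. Since $r_n$ decreases monotonically to $0$ (a quick check using continuity and the fact that $0$ is the only fixed point in $[0,r_1)$), and the continuous analogue $-\dot r = r^{1+\gamma}$ has solution $r(t) \sim (\gamma t)^{-1/\gamma}$, I expect $r_n \asymp n^{-1/\gamma}$; then the second bound follows by plugging into $r_{n-1} - r_n = r_n^{1+\gamma} \asymp n^{-1-1/\gamma}$.

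To make this precise I would set $a_n := r_n^{-\gamma}$. From $r_n = r_{n+1}(1 + r_{n+1}^\gamma)$ one gets $a_n = a_{n+1}(1 + r_{n+1}^\gamma)^{-\gamma}$, whence
\[
a_{n+1} - a_n \;=\; a_{n+1}\bigl[1 - (1 + r_{n+1}^\gamma)^{-\gamma}\bigr] \;=\; r_{n+1}^{-\gamma}\bigl[\gamma\, r_{n+1}^{\gamma} + O(r_{n+1}^{2\gamma})\bigr] \;=\; \gamma + O(r_{n+1}^\gamma).
\]
Because $r_{n+1} \to 0$, the differences tend to $\gamma$; telescoping and a Stolz/Cesaro argument then give $a_n / n \to \gamma$, i.e., $r_n \sim (\gamma n)^{-1/\gamma}$.

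To upgrade this asymptotic equivalence to the uniform two-sided bounds in the statement, I would use the Taylor expansion $(1+t)^{-\gamma} = 1 - \gamma t + O(t^2)$ with explicit remainder on the compact interval $t \in [0, r_1^\gamma]$. This produces an $N_0$ beyond which $a_{n+1} - a_n$ lies in, say, $[\gamma/2,\, 2\gamma]$; summing from $N_0$ to $n$ gives $a_n \asymp n$ for $n \geq N_0$, and enlarging the constant absorbs the finite initial segment $1 \leq n < N_0$. The second pair of inequalities then follows directly by substituting the first into $r_{n-1} - r_n = r_n^{1+\gamma}$.

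The main obstacle is producing a single universal constant $C_1$ valid for all $n \in \mathbb{N}$, as opposed to mere asymptotic equivalence. This is essentially a bootstrap: one first establishes $r_n \to 0$ qualitatively, then uses this decay to pin $a_{n+1} - a_n$ in a fixed neighborhood of $\gamma$, and only then are both directions of the desired inequality simultaneously available. Nothing else is delicate; the remainder is elementary calculus and careful bookkeeping of constants.
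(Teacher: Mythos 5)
Your proposal is correct: the recursion $r_{n-1}-r_n=r_n^{1+\gamma}$ together with the substitution $a_n=r_n^{-\gamma}$, the elementary bounds on $1-(1+t)^{-\gamma}$, and telescoping does yield uniform two-sided constants (indeed $\gamma t(1+t)^{-\gamma-1}\leq 1-(1+t)^{-\gamma}\leq\gamma t$ gives the bounds for all $n$ at once, so the $N_0$-bootstrap is not even needed), and the second pair of inequalities then follows by substitution as you say. The paper does not prove this statement itself but defers it to \S 6.2 of the cited reference, and your argument is essentially the standard proof given there, so there is nothing further to reconcile.
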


\begin{thm}[{A distortion estimate for $f^n \restriction_{[ 0,r_1 ]}$}] \label{thm:youngdist}
	There exists a constant $C_2 > 1$ such that for all integers $1 \leq m \leq n$, and for all points $x,y \in [ r_{n+1}, r_n)$,
	\begin{equation*}
	\bigg\lvert \log \frac{( f^m )'x}{( f^m )'y} \bigg\rvert \leq \frac{C_2}{r_{n-m} - r_{n-m+1}} \big\lvert f^m x - f^m y \big\rvert.
	\end{equation*}
\end{thm}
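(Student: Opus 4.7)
The plan is to telescope the logarithm, bound each summand using the size of $f''$, and control the intermediate intervals via a geometric distortion lemma. Since $f$ is monotone on $[0,r_1]$ and carries $[r_{j+1},r_j]$ bijectively onto $[r_j,r_{j-1}]$, the iterates $f^k x, f^k y$ both lie in $[r_{n-k+1},r_{n-k}]$ for $0 \leq k \leq m$. Set $I_k := [f^k x, f^k y]$. Writing $\log[(f^m)'x/(f^m)'y] = \sum_{k=0}^{m-1}\log[f'(f^k x)/f'(f^k y)]$ and applying the mean value theorem to $\log \circ f'$, each summand is bounded by
\[
\sup_{z \in [r_{n-k+1},r_{n-k}]} \frac{|f''(z)|}{f'(z)} \cdot |I_k|.
\]
Because $f' \geq 1$ on $[0,r_1]$ and $f''(z) = \gamma(1+\gamma)z^{\gamma-1}$, and because $r_{n-k+1}/r_{n-k}$ is bounded below uniformly (by Theorem \ref{thm:rnyoung}), this supremum is of order $r_{n-k}^{\gamma-1}$.

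The main obstacle is a geometric distortion lemma:
\[
|I_k| \leq C_3 \, |I_m| \, \frac{r_{n-k}-r_{n-k+1}}{r_{n-m}-r_{n-m+1}} \qquad \text{for } 0 \leq k \leq m.
\]
I would prove this by iterating the following one-step mean value identity: for $I \subset [r_{j+1},r_j]$ we have $|f(I)|/(r_{j-1}-r_j) = [f'(\xi)/f'(\eta)]\cdot|I|/(r_j-r_{j+1})$ for some $\xi,\eta \in [r_{j+1},r_j]$. Monotonicity of $f'$ together with the estimate $r_j^\gamma - r_{j+1}^\gamma = O(j^{-2})$ (a direct consequence of Theorem \ref{thm:rnyoung}, since $r_j^\gamma \asymp j^{-1}$) yields $\log[f'(r_j)/f'(r_{j+1})] = O(j^{-2})$. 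Multiplying these one-step ratios over $j = n-m+1,\ldots,n-k$ and absorbing finitely many small-$j$ terms into the constant gives a uniform bound $C_3$.

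Combining the two ingredients,
\[
\left|\log\frac{(f^m)'x}{(f^m)'y}\right| \leq C \sum_{k=0}^{m-1} r_{n-k}^{\gamma-1}|I_k| \leq \frac{C\,C_3\,|f^m x - f^m y|}{r_{n-m}-r_{n-m+1}} \sum_{k=0}^{m-1} r_{n-k}^{\gamma-1}(r_{n-k}-r_{n-k+1}).
\]
Theorem \ref{thm:rnyoung} gives $r_{n-k}^{\gamma-1}(r_{n-k}-r_{n-k+1}) \asymp (n-k)^{-2}$ (the exponents combine as $-(\gamma-1)/\gamma - 1 - 1/\gamma = -2$), so the remaining sum is dominated by $\sum_{j \geq 1} j^{-2} < \infty$ uniformly in $n$ and $m$; absorbing everything into a single $C_2$ completes the proof.
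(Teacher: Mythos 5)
Your proof is correct in outline and essentially complete, but note that the paper itself contains no proof of Theorem \ref{thm:youngdist}: it is quoted, together with Theorem \ref{thm:rnyoung}, from \S 6.2 of \cite{MR1750438}, so the comparison here is with that reference rather than with an in-paper argument. Your route is the standard distortion computation for Manneville--Pomeau maps and matches the spirit of the cited proof: telescope $\log\bigl[(f^m)'x/(f^m)'y\bigr]$, bound each summand by $\sup(|f''|/f')\cdot|I_k|$ over the branch interval $[r_{n-k+1},r_{n-k}]$, transport $|I_k|$ forward to $|I_m|$ with a uniform constant via the one-step mean value identity, and conclude because $\sum_j j^{-2}$ converges, which is exactly what makes $C_2$ independent of $m$ and $n$. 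The one point you should repair is the justification of $r_j^\gamma - r_{j+1}^\gamma = O(j^{-2})$: this does \emph{not} follow from $r_j^\gamma \asymp j^{-1}$ alone, since the upper and lower bounds in Theorem \ref{thm:rnyoung} carry different constants, so subtracting them only yields $O(j^{-1})$ (a sequence comparable to $j^{-1}$ can perfectly well have increments comparable to $j^{-1}$). Instead apply the mean value theorem, $r_j^\gamma - r_{j+1}^\gamma \le \gamma \max\{r_j^{\gamma-1}, r_{j+1}^{\gamma-1}\}(r_j - r_{j+1})$, and use \emph{both} estimates of Theorem \ref{thm:rnyoung} (plus, when $\gamma<1$, the uniform lower bound on $r_{j+1}/r_j$ that you already invoked for the $|f''|/f'$ supremum); alternatively, the defining relation $f(r_{j+1}) = r_j$ gives the exact identity $r_j - r_{j+1} = r_{j+1}^{1+\gamma}$, from which the $O(j^{-2})$ bound is immediate. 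With that line fixed, the iteration of the one-step ratios, the resulting geometric distortion lemma, and the final summation (whose exponent bookkeeping $-(\gamma-1)/\gamma - 1 - 1/\gamma = -2$ is correct) together do prove the theorem.
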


\begin{cor}[{A distortion estimate for $f^n \restriction_{[ r_1,1 ]}$}] \label{cor:C3}
	There exists a constant $C_3 > 1$ such that for all integers $1 \leq m \leq n$, and for all points $x,y \in [ p_n, p_{n-1} )$,
	\begin{equation*}
	\bigg\lvert \log \frac{( f^m )'x}{( f^m )'y} \bigg\rvert \leq \frac{C_3}{r_{n-m} - r_{n-m+1}} \big\lvert f^m x - f^m y \big\rvert.
	\end{equation*}
\end{cor}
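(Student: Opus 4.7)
The plan is to reduce Corollary~\ref{cor:C3} to Theorem~\ref{thm:youngdist} by applying $f$ once. The key observation is that if $x, y \in [p_n, p_{n-1})$ then by definition of the sequence $p_n$ we have $fx, fy \in [r_n, r_{n-1}) = [r_{(n-1)+1}, r_{n-1})$, placing us precisely in the setting of Theorem~\ref{thm:youngdist} at index $n - 1$.

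I would first dispose of the case $m = 1$. On $[r_1, 1]$ the explicit formula $f(x) = x + x^{1+\gamma} - 1$ gives $f'(x) = 1 + (1+\gamma)x^{\gamma}$, which is bounded between $c_0 := 1 + (1+\gamma)r_1^{\gamma} > 1$ and a finite constant, with $f''$ bounded as well. Hence $\log f'$ is Lipschitz on $[r_1, 1]$ with some constant $L$, and the mean value theorem yields $|\log(f'(x)/f'(y))| \leq L|x - y| \leq (L/c_0)|fx - fy|$. Since $r_{n-1} - r_n \leq 1 - r_1 < 1$, this handles the $m = 1$ case provided $C_3 \geq L(1 - r_1)/c_0$. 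For $m \geq 2$ (which forces $n \geq 2$) I would combine the chain rule decomposition
\[
\log \frac{(f^m)'(x)}{(f^m)'(y)} = \log \frac{(f^{m-1})'(fx)}{(f^{m-1})'(fy)} + \log \frac{f'(x)}{f'(y)}
\]
with the triangle inequality. Theorem~\ref{thm:youngdist} applied to $fx, fy \in [r_n, r_{n-1})$ with iterate $m - 1$ bounds the first term by $\frac{C_2}{r_{n-m} - r_{n-m+1}}|f^m x - f^m y|$, which already has the desired form.

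The delicate step is the second term: the Lipschitz estimate gives $|\log(f'(x)/f'(y))| \leq (L/c_0)|fx - fy|$, and I must convert $|fx - fy|$ into a multiple of $|f^m x - f^m y|/(r_{n-m} - r_{n-m+1})$. For this I would invoke Theorem~\ref{thm:youngdist} a second time, but now as a bounded-distortion statement: since $|f^m x - f^m y| \leq r_{n-m} - r_{n-m+1}$, its right-hand side is uniformly at most $C_2$, so $(f^{m-1})'$ on $[r_n, r_{n-1})$ is comparable, up to a factor $e^{\pm C_2}$, to its mean value over that interval. By the Markov identity $f^{m-1}([r_n, r_{n-1})) = [r_{n-m+1}, r_{n-m})$ this mean value equals $(r_{n-m} - r_{n-m+1})/(r_{n-1} - r_n)$, and a mean value argument then yields
\[
|fx - fy| \leq e^{C_2} \cdot \frac{r_{n-1} - r_n}{r_{n-m} - r_{n-m+1}}|f^m x - f^m y|.
\]
Combined with $r_{n-1} - r_n \leq 1$, this bounds the second term by $\frac{Le^{C_2}}{c_0(r_{n-m} - r_{n-m+1})}|f^m x - f^m y|$, and taking $C_3 = C_2 + Le^{C_2}(1-r_1)/c_0$ completes the proof. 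The main obstacle I anticipate is exactly this final conversion, since Theorem~\ref{thm:youngdist} must be used indirectly as an expansion estimate rather than as a black-box distortion bound.
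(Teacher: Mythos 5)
Your proposal is correct, and its skeleton is the paper's: the same chain-rule split $\log\frac{(f^m)'x}{(f^m)'y}=\log\frac{(f^{m-1})'(fx)}{(f^{m-1})'(fy)}+\log\frac{f'x}{f'y}$, the same observation that $fx,fy\in[r_n,r_{n-1})$ puts the first term inside Theorem \ref{thm:youngdist} with iterate $m-1$, and the same use of smoothness of the branch on $[r_1,1]$ for the one-step term. The only divergence is the step you call delicate. The paper settles it in one line: since $f'(t)=1+(1+\gamma)t^{\gamma}\geq 1$ on all of $[0,1]$, one has $|x-y|\leq|f^m x-f^m y|$ directly (it never passes through $|fx-fy|$), it bounds $|\frac{f''\xi}{f'\xi}|\leq\frac{\gamma}{r_1}$, and then uses $r_{n-m}-r_{n-m+1}\leq 1$ to absorb this constant into the required form, yielding $C_3=C_2+\frac{\gamma}{r_1}$. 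Your replacement for that step --- reusing Theorem \ref{thm:youngdist} as a uniform $e^{\pm C_2}$ distortion bound for $(f^{m-1})'$ on $[r_n,r_{n-1})$ (legitimate, since $|f^{m-1}u-f^{m-1}v|\leq r_{n-m}-r_{n-m+1}$ there), comparing with the mean slope $(r_{n-m}-r_{n-m+1})/(r_{n-1}-r_n)$ via the Mean Value Theorem, and concluding $|fx-fy|\leq e^{C_2}\frac{r_{n-1}-r_n}{r_{n-m}-r_{n-m+1}}|f^m x-f^m y|$ --- is also valid, and would indeed be the right tool if $f$ were not weakly expanding; but here it buys nothing, costs an extra factor $e^{C_2}$ in the constant, and obliges you to justify that $f^{m-1}$ maps $[r_n,r_{n-1}]$ monotonically onto $[r_{n-m+1},r_{n-m}]$ and is differentiable there (true, since all intermediate iterates stay in the left branch when $m\leq n$, but it should be said). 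In short: same decomposition, correct proof, with the anticipated obstacle being a non-obstacle once one notices $f'\geq 1$ everywhere.
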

\begin{proof}
	First assume that $m > 1$.  Observe that
	\begin{equation*}
	\bigg\lvert \log \frac{( f^m )'x}{( f^m )'y} \bigg\rvert \leq \bigg\lvert \log \frac{( f^{m-1} )' ( fx )}{( f^{m-1} )' ( fy )} \bigg\rvert + \big\lvert ( \log f' ) x - ( \log f' ) y \big\rvert.
	\end{equation*}
	Because $fx,fy \in [ r_n, r_{n-1})$, Theorem \ref{thm:youngdist} applies to the first term on the right-hand side above.  Now use the mean-value theorem to find $\xi \in ( x,y )$ such that
	\begin{align*}
	\bigg\lvert \log \frac{( f^m )'x}{( f^m )'y} \bigg\rvert &\leq \frac{C_2}{r_{n-m} - r_{n-m+1}} \big\lvert f^m x - f^m y \big\rvert + \bigg\lvert \frac{f'' \xi}{f' \xi} \bigg\rvert \lvert x - y \rvert \\
	&\leq \frac{C_2}{r_{n-m} - r_{n-m+1}} \big\lvert f^m x - f^m y \big\rvert + \sup \lvert f'' \rvert \cdot \big\lvert f^m x - f^m y \big\rvert \\
	&\leq \frac{C_2 + \sup \lvert f'' \rvert}{r_{n-m} - r_{n-m+1}} \big\lvert f^m x - f^m y \big\rvert.
	\end{align*}
	If $m=1$, then as above we have
	\begin{align*}
	\bigg\lvert \log \frac{( f^m )'x}{( f^m )'y} \bigg\rvert &= \big\lvert (\log f')x - (\log f')y \big\rvert \leq \sup \lvert f'' \rvert \cdot \lvert fx - fy \rvert \\
	&\leq \frac{C_2 + \sup \lvert f'' \rvert}{r_{n-m} - r_{n-m+1}} \big\lvert f^mx - f^my \big\rvert.
	\end{align*}
	The corollary follows by taking $C_3 := C_2 + \sup \lvert f'' \rvert$.
\end{proof}

\begin{defn}[Basic intervals of generation $n$; $G_n$] \label{def:basicinterval}
	Define the \textit{basic interval of generation $0$} to be $[ r_1, 1 ]$ and write $G_0 := \{ [ r_1, 1 ] \}$.  For $n \in \mathbb{N}$, a closed interval is called a \textit{basic interval of generation $n$} if it is the closure of a maximal open interval of monotonicity for $F^n$.  We denote by $G_n$ the collection of all basic intervals of generation $n$.  Thus, for example, $G_1 = \{ [ p_1,1 ], [ p_2,p_1 ], \ldots \}$.
\end{defn}

\begin{defn}[Labeling basic intervals via their itineraries] \label{def:Jsigma}
	Given $k \in \mathbb{N}$ and positive integers $m_1, \dots, m_k$, define $J_{m_1 \dots m_k} \in G_k$ as
	\begin{equation*}
	J_{m_1 \dots m_k} := \overline{{\textstyle \bigcap_{i=1}^k F^{-( i - 1 )} ( [ p_{m_i}, p_{m_i - 1}) ) }}.
	\end{equation*}
	Equivalently, we may recursively define $J_1 := [ p_1, 1 ]$, $J_2 := [ p_2, p_1 ]$, etc., and then declare $J_{m_1 \dots m_k} := J_{m_1 \dots m_{k-1}} \cap F^{-(k-1)} ( [ J_{m_k} ) )$.  Thus $J_{m_1 \dots m_k}$ is the $m_k$-th basic interval of $G_k$ in $J_{m_1 \dots m_{k-1}}$, with basic intervals numbered from right to left.
\end{defn}

In the following proposition we use the fact that $F$ is uniformly expanding, which follows from assumption (\ref{cond3}) in our definition of $f$. Define
\begin{equation*}
\lambda := \inf \big\{ \big\lvert F'x \big\rvert \colon x \in ( r_1, 1 ) \setminus \{ p_n \}_{n=1}^\infty \} > 1.
\end{equation*}

\begin{prop}[A distortion estimate for $F^n$] \label{prop:distF}
	There exists a constant $C_4 > 1$ such that for all integers $1 \leq k \leq n$, for all $J_{m_1 \dots m_n} \in G_n$, and for all $x,y \in ( J_{m_1 \dots m_n} )^\circ$,
	\begin{equation*}
	\bigg\lvert \log \frac{( F^k )'x}{( F^k )'y} \bigg\rvert \leq C_4.
	\end{equation*}
\end{prop}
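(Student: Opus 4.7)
The plan is to reduce the question to Corollary \ref{cor:C3} via the chain rule together with the uniform expansion of $F$. On the basic interval $J_{m_1 \dots m_n}$ the first return time satisfies $\tau \equiv m_1$, and on $F^i(J_{m_1 \dots m_n}) \subset J_{m_{i+1}}$ the return time is $m_{i+1}$, so
\begin{equation*}
(F^k)'(x) \;=\; \prod_{i=0}^{k-1} F'(F^i x) \;=\; \prod_{i=0}^{k-1} (f^{m_{i+1}})'(F^i x).
\end{equation*}
Thus the log-ratio in question telescopes into a sum of $k$ terms, each of the form $\log \frac{(f^{m_{i+1}})'(F^i x)}{(f^{m_{i+1}})'(F^i y)}$, and it suffices to bound this sum by a universal constant.

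For each $i$, because $x,y \in (J_{m_1 \dots m_n})^{\circ}$ the iterates $F^i x, F^i y$ lie in the interior of $J_{m_{i+1}} = [p_{m_{i+1}}, p_{m_{i+1}-1})$. I apply Corollary \ref{cor:C3} with $n=m=m_{i+1}$; the denominator $r_{n-m}-r_{n-m+1}$ collapses to the fixed positive number $1-r_1$, yielding
\begin{equation*}
\left|\log \frac{(f^{m_{i+1}})'(F^i x)}{(f^{m_{i+1}})'(F^i y)}\right| \;\leq\; \frac{C_3}{1-r_1}\, |F^{i+1}x - F^{i+1}y|.
\end{equation*}
Uniform expansion of $F$ gives $|F^{i+1}x - F^{i+1}y| \leq \lambda^{-(k-i-1)} |F^k x - F^k y| \leq (1-r_1)\lambda^{-(k-i-1)}$, since $F^k x, F^k y \in [r_1,1]$. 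Summing the geometric series over $i=0,\dots,k-1$ produces the bound $C_4 := C_3 \cdot \frac{\lambda}{\lambda-1}$, independent of $k$, $n$, and the address $m_1,\dots,m_n$.

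The conceptual heart of the argument is that passing from $f$ to the induced map $F$ replaces the bad denominator $r_{n-m}-r_{n-m+1}$ in Theorem \ref{thm:youngdist} with the harmless constant $1-r_1$: every ``excursion'' accounted for by a single application of $F$ lands back in $[r_1,1]$ and so pays no distortion cost from proximity to the indifferent fixed point. I expect no substantive obstacle; one only has to verify that the Markov structure guarantees that $F^i x$ and $F^i y$ sit in the same interval $J_{m_{i+1}}$ so that Corollary \ref{cor:C3} is legitimately applicable at each step, and this is immediate from Definition \ref{def:Jsigma}.
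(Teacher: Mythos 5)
Your argument is correct and follows the paper's own proof essentially verbatim: the chain-rule decomposition into $k$ terms of the form $\log\frac{(f^{m_i})'(F^{i-1}x)}{(f^{m_i})'(F^{i-1}y)}$, the application of Corollary \ref{cor:C3} with $m=n=m_i$ so the denominator becomes $r_0-r_1$, and the geometric-series bound from the uniform expansion of $F$. Your constant $C_3\lambda/(\lambda-1)$ is exactly the paper's $C_3\sum_{j\geq 0}\lambda^{-j}$, so there is nothing to add.
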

\begin{proof}
	Because $F^{i-1} x, F^{i-1} y \in ( p_{m_i}, p_{m_i-1} )$ for $1 \leq i \leq n$, we have
	\begin{equation*}
	\bigg\lvert \log \frac{( F^k )'x}{( F^k )'y} \bigg\rvert \leq \sum_{i=1}^k \bigg\lvert \log \frac{F' ( F^{i-1} x )}{F' ( F^{i-1} y )} \bigg\rvert = \sum_{i=1}^k \bigg\lvert \log \frac{( f^{m_i} )' ( F^{i-1} x )}{( f^{m_i} )' ( F^{i-1} y )} \bigg\rvert.
	\end{equation*}
	Now we use Corollary \ref{cor:C3} to obtain
	\begin{align*}
	\bigg\lvert \log \frac{( F^k )'x}{( F^k )'y} \bigg\rvert &\leq \frac{C_3}{r_0-r_1} \sum_{i=1}^k \big\lvert f^{m_i} ( F^{i-1} x ) - f^{m_i} ( F^{i-1} y ) \big\rvert \\
	&= \frac{C_3}{r_0-r_1} \sum_{i=1}^k \big\lvert F^i x - F^i y \big\rvert \\
	&\leq \frac{C_3}{r_0-r_1} \sum_{i=1}^k \lambda^{-( k - i )} \big\lvert F^k x - F^k y \big\rvert \\
	&< C_3 \sum_{j=0}^\infty \lambda^{-j} =: C_4. \qedhere
	\end{align*}
\end{proof}

\begin{prop}[An estimate of the lengths of basic intervals] \label{prop:C5}
	There exists a constant $C_5 \geq 1$ such that for all $n \in \mathbb{N}$, for all $J_\sigma \in G_n$, and for all $k \in \mathbb{N}$,
	\begin{gather*}
	C_5^{-1} k^{-1/\gamma} \leq \frac{\big\lvert \bigcup_{i=k}^\infty J_{\sigma i} \big\rvert}{\lvert J_\sigma \rvert} \leq C_5 k^{-1/\gamma}, \\
	C_5^{-1} k^{-1-1/\gamma} \leq \frac{\lvert J_{\sigma k} \rvert}{\lvert J_\sigma \rvert} \leq C_5 k^{-1-1/\gamma}.
	\end{gather*}
\end{prop}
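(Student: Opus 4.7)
My plan is to reduce the bounds for a general $J_\sigma \in G_n$ to the analogous bounds in generation 1 using the distortion estimate of Proposition \ref{prop:distF}, and then reduce those generation-1 ratios to the asymptotics of $\{r_n\}$ supplied by Theorem \ref{thm:rnyoung}, exploiting the fact that $f$ restricted to $\left[ r_1, 1 \right]$ is bilipschitz.

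First I would observe that since $J_\sigma$ is (the closure of) a maximal monotonicity interval of $F^n$, the map $F^n$ restricts to a homeomorphism $J_\sigma \to \left[ r_1, 1 \right]$ with $F^n \left( J_{\sigma k} \right) = J_k = \left[ p_k, p_{k-1} \right]$ and, by monotonicity of $F^n$ on $J_\sigma$, $F^n \bigl( \bigcup_{i \geq k} J_{\sigma i} \bigr) = \left[ r_1, p_{k-1} \right]$; in particular the latter union is itself a closed subinterval of $J_\sigma$. Applying the mean value theorem on $J_\sigma^\circ$ together with the bounded distortion estimate of Proposition \ref{prop:distF} then yields
\begin{equation*}
\frac{\left| J_{\sigma k} \right|}{\left| J_\sigma \right|} \asymp \frac{p_{k-1} - p_k}{1 - r_1}, \qquad \frac{\left| \bigcup_{i \geq k} J_{\sigma i} \right|}{\left| J_\sigma \right|} \asymp \frac{p_{k-1} - r_1}{1 - r_1},
\end{equation*}
with implicit constants $e^{\pm C_4}$ independent of $n$, $\sigma$, and $k$.

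Next I would convert these estimates, which involve the $p_k$, into ones involving the $r_k$. On $\left[ r_1, 1 \right]$ one has $f'(x) = 1 + (1+\gamma) x^\gamma$, which is bounded above and below by positive constants depending only on $\gamma$, so $f \restriction_{\left[ r_1, 1 \right]}$ is a bilipschitz bijection onto $\left[ 0, 1 \right]$. Since $f(r_1) = 0$ and $f(p_k) = r_k$, the MVT yields $p_{k-1} - p_k \asymp r_{k-1} - r_k$ and $p_{k-1} - r_1 \asymp r_{k-1}$. Feeding in the bounds $r_{k-1} - r_k \asymp k^{-1 - 1/\gamma}$ and $r_{k-1} \asymp k^{-1/\gamma}$ from Theorem \ref{thm:rnyoung} (the second using that $(k-1)^{-1/\gamma} \asymp k^{-1/\gamma}$ for $k \geq 2$; the case $k = 1$ is trivial, as both relevant ratios are then comparable to $1$) produces the two bounds of the proposition.

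I do not anticipate a serious obstacle here, since essentially all the work has been packaged into Proposition \ref{prop:distF} and Theorem \ref{thm:rnyoung}. The one book-keeping point that warrants care is confirming that $\bigcup_{i \geq k} J_{\sigma i}$ is itself an interval, which allows the MVT to be applied as a single estimate rather than forcing a term-by-term summation, and tracking the index shift so that $r_{k-1}$ (rather than $r_k$) appears on the upper end of the first bound.
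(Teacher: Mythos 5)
Your proposal is correct and follows essentially the same route as the paper: apply the Mean Value Theorem on $J_\sigma$ together with the bounded distortion estimate of Proposition \ref{prop:distF} to reduce the ratios to $\left| \left[ r_1, p_{k-1} \right] \right|$ and $\left| \left[ p_k, p_{k-1} \right] \right|$ over $1-r_1$, then use that $1 \leq f' \leq \sup f'\restriction_{\left( r_1,1 \right)}$ on $\left[ r_1,1 \right]$ to pass to $r_{k-1}$ and $r_{k-1}-r_k$, and finish with Theorem \ref{thm:rnyoung} (including the same treatment of the index shift and the trivial case $k=1$). No gaps; the bookkeeping points you flag are exactly the ones the paper handles.
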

\begin{proof}
	The first claimed estimate is trivially satisfied when $k=1$ as long as $C_5 \geq 1$.  For $k > 1$, use the mean-value theorem to find $\xi_1, \xi_2 \in ( J_\sigma )^\circ$ such that
	\begin{equation*}
	\frac{\big\lvert \bigcup_{i=k}^\infty J_{\sigma i} \big\rvert}{\lvert J_\sigma \rvert} = \frac{\lvert [ r_1,p_{k-1} ] \rvert / ( F^n )' (\xi_1)}{\lvert [ r_1,1 ] \rvert / ( F^n )' ( \xi_2)}.
	\end{equation*}
	Now using Proposition \ref{prop:distF} and the first estimate of Theorem \ref{thm:rnyoung} yields
	\begin{align*}
	\frac{\big\lvert \bigcup_{i=k}^\infty J_{\sigma i} \big\rvert}{\lvert J_\sigma \rvert} &\leq \frac{\exp(C_4)}{1-r_1} \lvert [ r_1,p_{k-1} ] \rvert \leq \frac{\exp( C_4 )}{1-r_1} \lvert f ( [ r_1,p_{k-1} ] ) \rvert \\
	&= \frac{\exp( C_4 )}{1-r_1} r_{k-1} \leq \frac{C_1 \exp( C_4 )}{1-r_1} ( k-1 )^{-1/\gamma} \\
	&\leq \frac{2^{1/\gamma} C_1 \exp( C_4 )}{1-r_1} k^{-1/\gamma}.
	\end{align*}
	Similarly we have
	\begin{align*}
	\frac{\big| \bigcup_{i=k}^\infty J_{\sigma i} \big|}{| J_\sigma |} &\geq \frac{\exp( -C_4 )}{1-r_1} | [ r_1,p_{k-1} ] | \geq \frac{\exp( -C_4 )}{( 1-r_1 ) \sup \lvert f' \rvert} | f ( [ r_1,p_{k-1} ] ) | \\
	&\geq \frac{C_1^{-1} \exp( -C_4 )}{( 1-r_1 ) \sup \lvert f' \rvert} ( k-1 )^{-1/\gamma} \\
	&\geq \frac{C_1^{-1} \exp( -C_4 )}{( 1-r_1 ) \sup \lvert f' \rvert} k^{-1/\gamma}.
	\end{align*}
	For the second claimed estimate, use the second inequality of Theorem \ref{thm:rnyoung} and as above we find
	\begin{align*}
	\frac{| J_{\sigma k} |}{| J_\sigma |} &\leq \frac{\exp( C_4 )}{1-r_1} | [ p_k,p_{k-1} ] | \leq \frac{\exp( C_4 )}{1-r_1} | f ( [ p_k,p_{k-1} ] ) | \\
	&= \frac{\exp( C_4 )}{1-r_1} | [ r_k, r_{k-1} ] | \leq \frac{C_1 \exp( C_4 )}{1-r_1} k^{-1-1/\gamma}
	\end{align*}
	as well as
	\begin{align*}
	\frac{| J_{\sigma k} |}{| J_\sigma |} &\geq \frac{\exp( -C_4 )}{1-r_1} | [ p_k,p_{k-1} ] | \geq \frac{\exp( -C_4 )}{( 1-r_1 ) \sup \lvert f' \rvert} | f ( [ p_k,p_{k-1} ] ) | \\
	&= \frac{\exp( -C_4 )}{( 1-r_1 ) \sup \lvert f' \rvert} | [ r_k,r_{k-1} ] | \geq \frac{C_1^{-1} \exp( -C_4 )}{( 1-r_1 ) \sup \lvert f' \rvert} k^{-1-1/\gamma}.
	\end{align*}
	The proposition follows by taking
	\begin{equation*}
	C_5 := \max \bigg\{ 1, \frac{2^{1/\gamma} C_1 \exp( C_4 )}{1-r_1}, \frac{( 1-r_1 ) \sup \lvert f' \rvert}{C_1^{-1} \exp( -C_4 )} \bigg\}. \qedhere
	\end{equation*}
\end{proof}

\begin{cor}\label{cor:kfromzeta}
	Fix $n \in \mathbb{N}$, $J_\sigma \in G_n$, and $\zeta \in ( 0,1 )$.  Find the unique $K \in \mathbb{N}$ such that $\partial^\ell J_\sigma + \zeta | J_\sigma | \in [ J_{\sigma K} )$.  Then
	\begin{equation*}
	( C_5 \zeta )^{-\gamma}  - 1 \leq K \leq \big( C_5^{-1} \zeta \big)^{-\gamma}.
	\end{equation*}
\end{cor}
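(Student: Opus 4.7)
The plan is to convert the geometric condition $\partial^\ell J_\sigma + \zeta |J_\sigma| \in [J_{\sigma K})$ into a pair of inequalities between $\zeta|J_\sigma|$ and the lengths of the tail unions $\bigcup_{i \geq K} J_{\sigma i}$, and then read off the bounds by applying the first estimate of Proposition~\ref{prop:C5}.

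First I would unpack the geometry. By Definition~\ref{def:Jsigma} the branches $J_{\sigma k}$ are numbered from right to left and accumulate at $\partial^\ell J_\sigma$; consequently $\bigcup_{i=K}^\infty J_{\sigma i}$ is the leftmost portion of $J_\sigma$ whose right endpoint is $\partial^r J_{\sigma K}$, whereas $\bigcup_{i=K+1}^\infty J_{\sigma i}$ has right endpoint $\partial^\ell J_{\sigma K}$. Using the half-open convention $[B) = B^\circ \cup \{\partial^\ell B\}$, the membership condition is therefore equivalent to
\[
\Big|\textstyle\bigcup_{i=K+1}^\infty J_{\sigma i}\Big| \;\leq\; \zeta|J_\sigma| \;<\; \Big|\textstyle\bigcup_{i=K}^\infty J_{\sigma i}\Big|.
\]

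Next I would apply Proposition~\ref{prop:C5} to each side. The upper estimate $\bigl|\bigcup_{i=K}^\infty J_{\sigma i}\bigr|/|J_\sigma| \leq C_5 K^{-1/\gamma}$ combined with the strict right inequality yields $\zeta < C_5 K^{-1/\gamma}$, hence $K < (C_5/\zeta)^\gamma = (C_5^{-1}\zeta)^{-\gamma}$. The lower estimate $\bigl|\bigcup_{i=K+1}^\infty J_{\sigma i}\bigr|/|J_\sigma| \geq C_5^{-1}(K+1)^{-1/\gamma}$ combined with the left inequality yields $(K+1)^{-1/\gamma} \leq C_5\zeta$, i.e.\ $K+1 \geq (C_5\zeta)^{-\gamma}$, which rearranges to $K \geq (C_5\zeta)^{-\gamma} - 1$.

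I do not anticipate any substantive obstacle: the corollary is essentially a direct translation of Proposition~\ref{prop:C5} from a statement about lengths of tails to one about which branch contains a prescribed fraction of the way across $J_\sigma$. The only care required is tracking which inequalities are strict and which are not, which is dictated entirely by the convention $[J_{\sigma K}) = J_{\sigma K}^\circ \cup \{\partial^\ell J_{\sigma K}\}$; after that, algebraic rearrangement of the power inequality finishes the proof.
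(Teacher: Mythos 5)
Your proposal is correct and follows essentially the same route as the paper: sandwich $\zeta\left|J_\sigma\right|$ between the diameters of the tail unions $\bigcup_{i=K+1}^\infty J_{\sigma i}$ and $\bigcup_{i=K}^\infty J_{\sigma i}$, apply the first estimate of Proposition~\ref{prop:C5}, and solve the resulting power inequalities for $K$. Your bookkeeping of strict versus non-strict inequalities is, if anything, slightly more careful than the paper's and yields the same (in fact marginally sharper) bounds.
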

\begin{proof}
	Because
	\begin{equation*}
	\bigcup_{i=K+1}^\infty J_{\sigma i} \subset \big( \partial^\ell J_\sigma, \partial^\ell J_\sigma + \zeta | J_\sigma | \big] \subset \bigcup_{i=K}^\infty J_{\sigma i},
	\end{equation*}
	Proposition \ref{prop:C5} allows us to estimate the diameters of the three sets above as follows:
	\begin{equation*}
	C_5^{-1} ( K+1 )^{-1/\gamma} | J_\sigma | \leq \Bigg| \bigcup_{i=K+1}^\infty J_{\sigma i} \Bigg| \leq \zeta | J_\sigma | \leq \Bigg| \bigcup_{i=K}^\infty J_{\sigma i} \Bigg| \leq C_5 K^{-1/\gamma} | J_\sigma |.
	\end{equation*}
	Solving the inequalities
	\begin{equation*}
	C_5^{-1} ( K+1 )^{-1/\gamma} \leq \zeta \quad \textrm{and} \quad \zeta \leq C_5 K^{-1/\gamma}
	\end{equation*}
	for $K$ completes the proof.
\end{proof}

\section{Commensurability} \label{sec:commensurability}
Throughout this section, all closed intervals $B \subset [r_1,1]$ are assumed to be nontrivial: $0 < |B| < |[r_1,1]|$. Following \cite{MR3021798}, we make the next two definitions.

\begin{defn}[Left endpoints of generation $n$] \label{def:leftend}
	A point is called a \textit{left endpoint of generation $n$} if it is the left endpoint of some basic interval of generation $n$.
\end{defn}

\begin{defn}[Commensurability with generation $n$] \label{def:commensurability}
	Given a closed interval $B$ and $n \in \mathbb{N}$, $B$ is \textit{commensurate with generation $n$ (c.w.g.\ $n$)} if $B$ contains some member of $G_n$ but no member of $G_{n-1}$.
\end{defn}

We observe the following properties of basic intervals:
\begin{enumerate}[label=(\roman*)]
	\item \label{obs1} For all $I \in G_n$ with $n > 0$, and all $0 \leq k \leq n-1$, there exists a unique member of $G_k$ properly containing $I$.
	\item \label{obs2} Basic intervals of distinct generations are either nested or disjoint.
	\item \label{obs3} Basic intervals of the same generation have disjoint interiors.
	\item \label{obs4} Every basic interval $I \in G_n$ has a unique left-adjacent basic interval in $G_n$.
	\item \label{obs5} Every basic interval $I_{\sigma k} \in G_n$, where $|\sigma| \geq 0$ and $k > 1$, has a unique right-adjacent basic interval in $G_n$.
	\item \label{obs6} If $\ell$ is a left endpoint of generation $n$ and $\epsilon > 0$, then the interval $( \ell, \ell+\epsilon )$ contains infinitely many members of $G_{n+k}$ for all $k \geq 1$.
	\item \label{obs7} For each $n \in \mathbb{N} \cup \{ 0 \}$, the union of the elements of $G_n$ is co-countable, hence dense, in $[ r_1,1 ]$.
\end{enumerate}

\begin{lem}
Every closed interval $B \subset [r_1,1]$ is commensurate with a unique generation.
\end{lem}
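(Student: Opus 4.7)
The plan is to prove existence and uniqueness of the commensurate generation separately. For uniqueness, suppose $B$ is commensurate with two distinct generations $m < n$. By c.w.g.\ $m$, some $I \in G_m$ is contained in $B$. Either $m = n-1$, in which case $I$ itself is an element of $G_{n-1}$ contained in $B$, or $m < n-1$, in which case one descends from $I$ through the successive Markov partitions (pick any $G_{m+1}$ child $J_{\sigma k} \subseteq I$, then any $G_{m+2}$ child of that, and so on) to arrive at an element of $G_{n-1}$ nested inside $I \subseteq B$. Either case directly contradicts c.w.g.\ $n$, so at most one generation is possible.

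For existence, set $N(B) := \{n \geq 0 : B \supseteq I \text{ for some } I \in G_n\}$; then $\min N(B)$, when it exists, is the commensurate generation by definition, so it suffices to show $N(B) \ne \emptyset$. Pick any $x \in B^\circ$, which is possible since $|B| > 0$. If $x$ is not a left endpoint of any generation, induction on $n$ produces for each $n$ a unique $I_n(x) \in G_n$ containing $x$, and these intervals are nested. Since $F^n$ maps $I_n(x)$ bijectively onto $[r_1,1]$ with $(F^n)' \geq \lambda^n$ by the uniform expansion of $F$, the Mean Value Theorem yields $|I_n(x)| \leq (1-r_1)/\lambda^n \to 0$, so $I_n(x) \subseteq B^\circ$ for $n$ sufficiently large, placing such $n$ in $N(B)$. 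If instead $x$ is a left endpoint of some generation $n_0$, then observation (vi) above supplies infinitely many members of $G_{n_0+1}$ contained in the subinterval $(x, \partial^r B) \subseteq B$, each of which witnesses $n_0+1 \in N(B)$.

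The main subtlety is precisely the left-endpoint case: a point of the form $x = \partial^\ell J_\sigma$ lies in \emph{no} $G_n$-sub-interval of $J_\sigma$ for $n > |\sigma|$, since it is the accumulation point from the right of the children $J_{\sigma k}$ as $k \to \infty$; hence the nested-intervals construction of $I_n(x)$ breaks down at such $x$. Observation (vi) is exactly what is needed to recover, after which the remaining arguments amount to a routine invocation of the uniform expansion of $F$ and the Markov structure already in place.
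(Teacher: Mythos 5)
Your proof is correct, but both halves take a genuinely different route from the paper's. For uniqueness, the paper does not descend through the children of the contained interval $I \in G_{g_1}$; instead it observes that $\partial^\ell I$ lies in $[B)$ and applies Observation (vi) repeatedly to $(\partial^\ell I, \partial^r B) \subset B$ to manufacture a member of $G_{g_2-1}$ inside $B$. Your descent $J_{\sigma k} \subset J_\sigma$ through successive generations is more elementary---it uses only the Markov nesting structure, with no accumulation or metric facts---and it is valid, since every basic interval contains basic intervals of every higher generation. For existence, the paper argues that the set of left endpoints of all generations is dense in $[r_1,1]$, so $B^\circ$ contains a left endpoint of some generation $n$, and Observation (vi) then plants a member of $G_{n+1}$ inside $B$; you instead fix an arbitrary $x \in B^\circ$ and, in the generic case, shrink nested basic intervals $I_n(x)$ onto $x$ via the uniform expansion $(F^n)' \geq \lambda^n$, reserving Observation (vi) only for the countably many points that are left endpoints (your remark that such an $x = \partial^\ell J_\sigma$ lies in no higher-generation subinterval of $J_\sigma$ is accurate, and your patch handles it). The trade-off: your existence argument needs the quantitative input $\lambda > 1$, which the paper has available anyway, while the paper's needs the density of the left-endpoint set, which yours avoids; your uniqueness argument dispenses with Observation (vi) altogether. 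One loose end, shared with the paper's own proof: if $B = [r_1,1]$ itself, the least generation containing a basic interval inside $B$ is $0$, and ``c.w.g.\ $0$'' is not covered by the definition; this degenerate case never arises in the game, since Alice's first move excludes $r_1$ and $1$, but strictly speaking both your $\min N(B)$ and the paper's $n_0 \geq 1$ assertion presuppose $B \neq [r_1,1]$.
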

\begin{proof}
	Since $F$ is uniformly expanding, the mesh of $G_n$ tends to zero.  Together with Observation \ref{obs7}, this implies that $B$ eventually intersects at least two adjacent basic intervals of a common generation, and hence contains a left endpoint of a basic interval. Hence $B$ contains some basic interval by Observation \ref{obs6}.  Let $n_0$ be the least generation for which $B$ contains a member of $G_{n_0}$.  Then $n_0 \geq 1$, and $B$ contains a member of $G_{n_0}$ but no member of $G_{n_0-1}$.

	Suppose $B$ is c.w.g.\ $g_1$ and $g_2$, where $g_1 < g_2$.  $B$ contains some $I \in G_{g_1}$; hence $[ B )$ contains $\partial^\ell I$.  Thus $\big( \partial^\ell I, \partial^r B \big) \subset B$ contains an element of $G_{g_1+1}$ by Observation \ref{obs6}.  Repeating this argument shows that $B$ contains an element of $G_{g_2 - 1}$, contradicting that $B$ is c.w.g.\ $g_2$.
\end{proof}

\begin{cor} \label{cor:atmost2}
	If a closed interval $B \subset [r_1,1]$ is c.w.g.\ $n$, then $B$ intersects either one or two elements of $G_{n-1}$.
\end{cor}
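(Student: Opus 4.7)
The plan is to establish two separate bounds: (a) $B$ meets at least one element of $G_{n-1}$, and (b) $B$ meets at most two. Part (a) is essentially immediate from Observation \ref{obs1}: because $B$ is c.w.g.\ $n$, it contains some $I \in G_n$, and Observation \ref{obs1} supplies a unique $I' \in G_{n-1}$ with $I \subset I'$, so $I \subset B \cap I'$ and thus $B$ meets $I'$.

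For part (b) I would proceed by contradiction. Assume $B$ meets three distinct members $I_1, I_2, I_3 \in G_{n-1}$. By Observation \ref{obs3} these intervals have pairwise disjoint interiors, so they are linearly ordered on the real line; relabel so that $I_1$ lies to the left of $I_2$, which lies to the left of $I_3$. Choosing $x_1 \in B \cap I_1$ and $x_3 \in B \cap I_3$ and using that $B$ is an interval, I would deduce $[x_1, x_3] \subset B$. The left-to-right ordering of the $I_j$ forces $x_1 \leq \partial^r I_1 \leq \partial^\ell I_2$ and $\partial^r I_2 \leq \partial^\ell I_3 \leq x_3$, whence $I_2 \subset [x_1, x_3] \subset B$. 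But then $B$ contains an element of $G_{n-1}$, contradicting the hypothesis that $B$ is c.w.g.\ $n$.

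I expect no serious obstacle here. The only point requiring any care is verifying that disjoint-interior closed subintervals of $[r_1,1]$ admit a well-defined left-to-right order and that the middle one is actually contained in the segment joining points chosen in the outer two; both facts are trivial consequences of Observation \ref{obs3} together with the fact that $B$ is connected. Everything else is bookkeeping.
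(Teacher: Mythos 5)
Your proof is correct and follows essentially the same route as the paper: assume three elements of $G_{n-1}$ meet $B$, use the ordering and disjoint interiors to trap the middle interval inside $B$, contradicting that $B$ is c.w.g.\ $n$. The only cosmetic differences are that you get the ``at least one'' part from Observation \ref{obs1} (containment of some $I \in G_n$ in its parent) rather than density (Observation \ref{obs7}), and you avoid the paper's intermediate reduction to three \emph{adjacent} elements by sandwiching $I_2$ between chosen points $x_1, x_3 \in B$ --- both fine.
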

\begin{proof}
	$B$ intersects at least one member of $G_{n-1}$ by Observation \ref{obs7}.  If $B$ intersects three elements of $G_{n-1}$, then $B$ intersects three adjacent elements of $G_{n-1}$, hence properly contains the middle one, contradicting that $B$ is c.w.g.\ $n$.
\end{proof}

\begin{lem} \label{lem:accum}
	If a closed interval $B \subset [r_1,1]$ is c.w.g.\ $n$, then $B$ contains at most one left endpoint of generation at most $n-1$.  Furthermore, if $B$ contains a left endpoint $\ell$ of generation $k < n-1$, then $\ell$ is the right endpoint of $B$.
\end{lem}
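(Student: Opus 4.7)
The plan is to establish the second, sharper claim first---its proof is driven purely by observation \ref{obs6}---and then to reduce the first claim to essentially the same mechanism, handling one residual case via the Markov structure.

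I would begin by supposing $\ell \in B$ is a left endpoint of some generation $k < n-1$, and, aiming for a contradiction, assume $\ell < \partial^r B$. Choosing $\epsilon > 0$ with $\ell + \epsilon < \partial^r B$ and noting that $n-1-k \geq 1$, observation \ref{obs6} guarantees that $(\ell, \ell + \epsilon)$ contains a member of $G_{n-1}$, which then lies inside $B$---contradicting that $B$ is c.w.g.\ $n$. Hence $\ell = \partial^r B$, proving the second claim.

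For the first claim, I would suppose $\ell_1 < \ell_2$ are two left endpoints in $B$ of generations $k_1, k_2 \leq n-1$, and let $J_1 \in G_{k_1}$ denote the basic interval with $\partial^\ell J_1 = \ell_1$. The argument then splits on the position of $\partial^r J_1$. If $\partial^r J_1 \leq \ell_2$, then $J_1 \subset [\ell_1, \ell_2] \subset B$; since $k_1 \leq n-1$, $J_1$ either already lies in $G_{n-1}$ or contains a sub-basic-interval in $G_{n-1}$ (by iteratively subdividing via Definition \ref{def:Jsigma}), contradicting commensurability. If instead $\partial^r J_1 > \ell_2$ (so $\ell_2 \in J_1^\circ$) and $k_1 \leq n-2$, I would again invoke observation \ref{obs6} to place a member of $G_{n-1}$ in $(\ell_1, \ell_1 + \epsilon) \subset B$ for $\epsilon$ small enough that $\ell_1 + \epsilon < \ell_2$, reaching the same contradiction.

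The hard part will be the remaining subcase, $k_1 = n-1$ and $\ell_2 \in J_1^\circ$, because observation \ref{obs6} here only supplies elements of generations $\geq n$, not of $G_{n-1}$. I plan to argue combinatorially. Let $J_2 \in G_{k_2}$ satisfy $\partial^\ell J_2 = \ell_2$. If $k_2 = n-1 = k_1$, then $J_1$ and $J_2$ both contain a right-neighborhood of $\ell_2$, so their interiors overlap, and observation \ref{obs3} forces $J_1 = J_2$, yielding $\ell_1 = \ell_2$. If $k_2 < n-1$, then observation \ref{obs2} forces $J_1, J_2$ to be nested, and observation \ref{obs1} supplies a unique parent $J \in G_{k_2}$ properly containing $J_1$; since $\ell_2 \in J_1^\circ \subset J^\circ$ and $J_2$ extends to the right of $\ell_2$, the interiors of $J$ and $J_2$ meet, so observation \ref{obs3} identifies $J = J_2$, whence $J_1 \subsetneq J_2$ and $\ell_2 = \partial^\ell J_2 \leq \partial^\ell J_1 = \ell_1$. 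Both alternatives contradict $\ell_1 < \ell_2$, completing the argument.
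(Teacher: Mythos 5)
Your proposal is correct, and it draws on the same toolkit as the paper (Observations \ref{obs1}, \ref{obs2}, \ref{obs3}, \ref{obs6}), but with a genuinely different decomposition. The paper trichotomizes on the generations $g_1, g_2$ of the two endpoints: for $g_1 = g_2$ it traps a whole basic interval of generation $g_1$ inside $B$; for $g_1 < g_2$ it iterates Observation \ref{obs6} to manufacture a left endpoint of generation $g_2$ inside $(\ell_1,\ell_2)$ and reduces to the first case; for $g_1 > g_2$ it uses Observation \ref{obs2} to conclude the two basic intervals are disjoint, so the one at $\ell_1$ lies inside $B$. You instead split on geometry first, namely whether the basic interval $J_1$ at $\ell_1$ ends at or before $\ell_2$: in the first alternative you trap $J_1$, and hence by subdividing a member of $G_{n-1}$, inside $B$; in the second you apply Observation \ref{obs6} a single time, aimed directly at generation $n-1$, instead of the paper's iteration; and the residual case $k_1 = n-1$ with $\ell_2 \in J_1^\circ$ you settle via Observation \ref{obs1} (the $G_{k_2}$-parent of $J_1$ must coincide with $J_2$ by Observation \ref{obs3}, forcing $\ell_2 \leq \ell_1$), a step with no analogue in the paper, whose corresponding case argues disjointness of $I_1$ and $I_2$ directly. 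A side benefit of your explicit descent to $G_{n-1}$ is that it justifies a point the paper uses tacitly, that containing a basic interval of generation strictly below $n-1$ already contradicts commensurability with generation $n$. Two details you leave implicit are immediate and worth a half-sentence each: $J_1$ and $J_2$ are not disjoint because both contain $\ell_2$, so Observation \ref{obs2} indeed yields nesting; and in that subcase $k_2 \geq 0$ forces $n \geq 2$, so $k_1 = n-1 \geq 1$ and Observation \ref{obs1} is applicable. Your proof of the second claim is the same as the paper's.
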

\begin{proof}
	Suppose $B$ contains two left endpoints $\ell_1 < \ell_2$ of generations $g_1, g_2$, respectively, and $g_1, g_2 \leq n-1$.  First assume that $g_1 = g_2$.  Then $B$ contains two adjacent left endpoints of generation $g_1$; hence $B$ contains a basic interval of generation $g_1 \leq n-1$, contradicting that $B$ is c.w.g.\ $n$.
	
	Next assume $g_1 < g_2$.  Then the interval $( \ell_1,\ell_2 )$ contains an element of $G_{g_1+1}$ by Observation \ref{obs6}; hence $( \ell_1,\ell_2 )$ contains a left endpoint of generation $g_1+1$.  Repeating this argument shows that $( \ell_1,\ell_2 ) \subset B$ contains a left endpoint of generation $g_2$.  Now we are in the situation of the previous case, giving a contradiction.
	
	Finally, assume $g_1 > g_2$.  For $i \in \{ 1,2 \}$ let $I_i$ be the basic interval of generation $g_i$ with left endpoint $\ell_i$.  By Observation \ref{obs2}, either $I_1 \cap I_2 = \emptyset$, $I_1 \subset I_2$, or $I_2 \subset I_1$.  Now $I_2 \subset I_1$ is impossible because $g_2 < g_1$, and $I_1 \subset I_2$ is impossible because $\partial^\ell I_1 \notin I_2$.  So $I_1 \cap I_2 = \emptyset$ and thus $B$ contains $I_1$, a basic interval of generation at most $n-1$.  This contradicts that $B$ is c.w.g.\ $n$.
	
	For the second claim of the lemma, observe that if $[ B )$ contains a left endpoint $\ell$ of generation $k < n-1$, then the interval $(\ell, \partial^r B) \subset B$ contains a basic interval of generation $k+1 < n$ by Observation \ref{obs6}, contradicting that $B$ is c.w.g.\ $n$.
\end{proof}

\begin{cor} \label{cor:gnminus2}
	If a closed interval $B \subset [r_1,1]$ is c.w.g.\ $n \geq 2$, then there is a unique element of $G_{n-2}$ that properly contains $B$.
\end{cor}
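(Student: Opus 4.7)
The plan is to dispatch uniqueness first and then prove existence by a case analysis on how many members of $G_{n-1}$ the interval $B$ meets. Uniqueness is immediate: two distinct elements of $G_{n-2}$ have disjoint interiors by Observation (iii), so their intersection is at most a single point and cannot contain a nontrivial closed interval such as $B$.

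For existence, Corollary \ref{cor:atmost2} tells me $B$ intersects either one or two elements of $G_{n-1}$. If $B$ meets exactly one $I \in G_{n-1}$, I would combine the density of $\bigcup G_{n-1}$ in $[0,1]$ (Observation (vii)) with the fact that $B$ avoids every other element of $G_{n-1}$ to conclude $B \subset I$; the containment is strict because $B$ cannot contain $I$ by the c.w.g.\ $n$ hypothesis. Then Observation (i) supplies the unique $J \in G_{n-2}$ properly containing $I$, and this $J$ properly contains $B$ as well.

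If instead $B$ meets two members $I_1, I_2 \in G_{n-1}$, the reasoning of Corollary \ref{cor:atmost2} forces them to be right-adjacent; write $\ell = \partial^r I_1 = \partial^\ell I_2$ and let $J_1, J_2 \in G_{n-2}$ be their unique parents from Observation (i). The crux is the sub-claim $J_1 = J_2$. I would argue by contradiction: if $J_1 \neq J_2$, the disjoint-interior property of $G_{n-2}$ together with $I_1 \subset J_1$ and $I_2 \subset J_2$ forces $\ell = \partial^r J_1 = \partial^\ell J_2$, so that $I_2$ becomes a basic subinterval of $J_2$ with $\partial^\ell I_2 = \partial^\ell J_2$. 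But by the labeling in Definition \ref{def:Jsigma} (branches numbered right to left) together with the asymptotic $|J_{\sigma k}|/|J_\sigma| \to 0$ from Proposition \ref{prop:C5}, the $G_{n-1}$-subintervals of $J_2$ accumulate at $\partial^\ell J_2$ while none of them has that point as its left endpoint, a contradiction. Granting $J_1 = J_2 =: J$, a second density argument yields $B \subset I_1 \cup I_2 \subset J$, and $B \subsetneq J$ because $B$ still cannot contain $I_1$.

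The main obstacle I anticipate is the sub-claim in the two-interval case: articulating cleanly why two right-adjacent basic intervals of generation $n-1$ must share a single $G_{n-2}$ parent. The structural asymmetry—the rightmost branch $J_{\sigma,1}$ actually attains $\partial^r J_\sigma$, whereas no leftmost branch exists because the $J_{\sigma k}$ accumulate at $\partial^\ell J_\sigma$—is the essential input. Everything else should reduce to routine bookkeeping with Observations (i)--(iii) and (vii).
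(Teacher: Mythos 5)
Your proof is correct, but it runs along a genuinely different track from the paper's. The paper stays at generation $n-2$: by Observation \ref{obs7} the set $[B)$ meets some element of $G_{n-2}$; if $B$ met two (necessarily adjacent) elements of $G_{n-2}$, their common endpoint would be a left endpoint of generation $n-2 < n-1$ lying in $B$, so the second claim of Lemma \ref{lem:accum} forces that point to equal $\partial^r B$; hence exactly one element of $G_{n-2}$ meets $[B)$, and that element must contain $B$, properly since $B$ is c.w.g.\ $n$. You instead work at generation $n-1$: Corollary \ref{cor:atmost2} limits $B$ to one or two elements of $G_{n-1}$, you pass to their parents via Observation \ref{obs1}, and in the two-interval case you prove the structural sub-claim that adjacent basic intervals of the same generation share their $G_{n-2}$ parent, because no basic interval can have its parent's left endpoint as its own left endpoint. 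That sub-claim is true and your argument for it is sound; note, though, that the single-branch ratio $\left| J_{\sigma k} \right| / \left| J_\sigma \right| \to 0$ from Proposition \ref{prop:C5} is not by itself the operative fact --- what you really need is the right-to-left labeling of Definition \ref{def:Jsigma} together with the existence of infinitely many branches to the left of any given one (Observation \ref{obs6}, or the tail estimate $\left| \bigcup_{i \geq k} J_{\sigma i} \right| / \left| J_\sigma \right| \geq C_5^{-1} k^{-\frac{1}{\gamma}} > 0$), which instantly rules out a branch with left endpoint $\partial^\ell J_\sigma$. What each route buys: the paper's is shorter because Lemma \ref{lem:accum} already packages the endpoint dichotomy and the $[B)$ convention absorbs the boundary case where $\partial^r B$ is a shared endpoint; yours avoids Lemma \ref{lem:accum} entirely, makes uniqueness explicit via disjoint interiors (Observation \ref{obs3}), and isolates a reusable feature of this infinite-branch Markov structure --- the asymmetry that a parent's right endpoint is attained by its first branch while its left endpoint is only an accumulation point of branches --- at the cost of an extra case split and two density arguments.
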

\begin{proof}
	The interval $[ B )$ intersects at least one member of $G_{n-2}$ by Observation \ref{obs7}.  If $B$ intersects two members of $G_{n-2}$, then $B$ intersects two adjacent members $I_1 < I_2$ of $G_{n-2}$.  By Lemma \ref{lem:accum}, $\partial^\ell I_2 = \partial^r B$.  This shows that there is exactly one element of $G_{n-2}$ that intersects $[ B )$; hence this element must contain $B$ by Observation \ref{obs4}.  Proper containment follows because $B$ is c.w.g.\ $n$.
\end{proof}

\section{Proof that $\mathcal{E}_F$ is strong winning (Theorem \ref{thm:maintheoremF})} \label{sec:maintheoremF}
\subsection{Initial steps}
Recall the constant $C_5 \geq 1$ provided by Proposition \ref{prop:C5}, in which bounds on the lengths of basic intervals are derived; $\gamma > 0$, which appears in the exponent in the definition of $f$, controls the degree of nonuniform hyperbolicity of the system.  Define
\begin{equation*}
	\alpha := 2^{-2-1/\gamma} C_5^{-1} \in \big( 0,\tfrac{1}{4} \big)
\end{equation*}
and let $\beta \in ( 0,1 )$ be arbitrary.  We now show that $\mathcal{E}_F$ is $( \alpha, \beta )$-strong winning.

Bob begins the game by choosing $B_1 \subset [ r_1, 1 ]$.  Alice chooses $A_1 \subset B_1$ so that $\{ r_1, 1 \} \cap A_1 = \emptyset$.  Bob chooses $B_2 \subset A_1$.  Thus $B_2$ is c.w.g.\ $g_1 > 0$.

Find $d_1'$ large enough that $| B_2 | > (d_1')^{-1} | I |$ for all $I \in G_{g_1-1}$ that intersect $B_2$.  Next, if $g_1 = 1$, define $d_2' := 1$.  Otherwise find $d_2' > 1$ large enough so that $B_2 \cap \big( \partial^\ell I, \partial^\ell I + (d_2')^{-1} | I | \big) = \emptyset$ for all $I \in \bigcup_{g=0}^{g_1-2} G_g$.

Now fix constants $d_1$ and $d_2$ satisfying
\begin{align*}
d_1 &> \max \big\{ d_1', 2^{1+1/\gamma} C_5^4 ( \alpha \beta )^{-1} \big\}, \\
d_2 &> \max \big\{ d_2', 2^{1+2/\gamma} C_5^6 ( \alpha \beta )^{-1}, 2d_1 ( 1-2\alpha )^{-1} \big\}.
\end{align*}

Let $n_1 := 2$.  During the course of the $( \alpha, \beta )$ game we will prove the following claim, which is the heart of our proof, by induction.
\begin{claim}
	Regardless of how Bob plays the $( \alpha, \beta )$ game, Alice can play in such a way that: there exist integers $0 < n_1 < n_2 < \dots$ and $0 < g_1 < g_2 < \dots$ such that for all $j \in \mathbb{N}$, the following statements hold:
	\begin{enumerate}[label=$P_{\arabic*} ( j ):$,leftmargin=2\parindent,align=left]
		\item $B_{n_j}$ is c.w.g.\ $g_j$,
		\item $\big| B_{n_j} \big| > d_1^{-1} | I |$ for all $I \in G_{g_j-1}$ that intersect $B_{n_j}$,
		\item $B_{n_j} \cap \big( \partial^\ell I, \partial^\ell I + d_2^{-1} | I | \big) = \emptyset$ for all $I \in \bigcup_{g=0}^{g_j-2} G_g$.
	\end{enumerate}
\end{claim}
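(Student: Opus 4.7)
The plan is to prove the claim by induction on $j$, with Alice's strategy constructed alongside the induction. The base case $j=1$ is essentially by construction: with $n_1 = 2$, the fact that Alice's opening move forced $\{r_1,1\} \cap A_1 = \emptyset$ makes $B_2 \subset A_1$ avoid the endpoints of $[r_1,1] \in G_0$, so $B_2$ is c.w.g.\ some $g_1 \geq 1$ (giving $P_1(1)$). Properties $P_2(1)$ and $P_3(1)$ then follow immediately from $d_1 > d_1'$ and $d_2 > d_2'$ (the latter vacuous if $g_1 = 1$).

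For the inductive step, assume $P_1(j)$--$P_3(j)$ hold. Alice's prescription is to play one carefully chosen ``dodging'' move at round $n_j$ and arbitrary legal moves at all subsequent rounds, until Bob's $B_m$ is for the first time c.w.g.\ some $g > g_j$; that $m$ is taken to be $n_{j+1}$ and $g_{j+1} := g$. The dodge at round $n_j$ must avoid the forbidden zones $(\partial^\ell J, \partial^\ell J + |J|/d_2)$ for the at-most-two $J \in G_{g_j-1}$ meeting $B_{n_j}$ (Corollary \ref{cor:atmost2}); by $P_2(j)$ each such zone intersects $B_{n_j}$ in length $< (d_1/d_2)|B_{n_j}|$, so by the choice $d_2 > 2 d_1 (1-2\alpha)^{-1}$ their combined length in $B_{n_j}$ is strictly less than $(1-2\alpha)|B_{n_j}|$. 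A direct geometric analysis shows that the leftmost forbidden zone meets $B_{n_j}$ flush with its left endpoint, so removing the zones leaves at most two remaining subintervals, the larger of which has length exceeding $\alpha|B_{n_j}|$; Alice plants $A_{n_j}$ inside it.

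Next I verify the three properties at index $j+1$ as follows. $P_1(j+1)$ is immediate. For $P_3(j+1)$ the range $g \leq g_{j+1}-2$ partitions into three pieces: generations $g \leq g_j - 2$ inherit from $P_3(j)$ since $B_{n_{j+1}} \subset B_{n_j}$; generation $g = g_j - 1$ inherits from the dodge since $B_{n_{j+1}} \subset A_{n_j}$; and the intermediate range $g_j \leq g \leq g_{j+1} - 2$, nonempty only in the multi-jump case $g_{j+1} > g_j + 1$, is controlled by combining Lemma \ref{lem:accum} (which forces any interior offending left endpoint of generation $\leq g_{j+1} - 2$ to coincide with $\partial^r B_{n_{j+1}}$, so its forbidden zone lies outside $B_{n_{j+1}}$) with the length estimates of Proposition \ref{prop:C5} and Corollary \ref{cor:kfromzeta} to bound the reach of forbidden zones from left endpoints lying to the left of $B_{n_{j+1}}$. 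For $P_2(j+1)$ I would use $|B_{n_{j+1}}| \geq \alpha\beta|B_{n_{j+1}-1}|$ together with the fact that $B_{n_{j+1}-1}$ is still c.w.g.\ $g_j$ (so contains an element of $G_{g_j}$ of comparable size) and apply Proposition \ref{prop:C5} to compare these with any $J \in G_{g_{j+1}-1}$ meeting $B_{n_{j+1}}$; the specific constants $2^{1+1/\gamma}C_5^2(\alpha\beta)^{-1}$ and $2^{1+2/\gamma}C_5^4(\alpha\beta)^{-1}$ in the definitions of $d_1$ and $d_2$ are what close the resulting inequalities.

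The main obstacle is the multi-jump subcase of $P_3(j+1)$ (and, analogously, of $P_2(j+1)$): left endpoints of intermediate generations $g_j, g_j + 1, \ldots, g_{j+1} - 2$ sitting to the left of $B_{n_{j+1}}$ were never touched by any explicit dodge, so their forbidden zones must be excluded from $B_{n_{j+1}}$ by size estimates alone. The polynomial-decay bound $|J_{\sigma k}| \sim k^{-1-1/\gamma}|J_\sigma|$ of Proposition \ref{prop:C5}---which quantitatively encodes the unbounded geometry produced by nonuniform expansion near $0$---combined with Corollary \ref{cor:kfromzeta} translating the position of $\partial^\ell B_{n_{j+1}}$ inside its relevant ancestor basic interval into the index of the nearest accumulating branch, is the mechanism that closes these estimates and so carries the induction forward.
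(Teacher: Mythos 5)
Your overall architecture (a ``dodge'' at round $n_j$, then wait until Bob advances the commensurate generation, then verify $P_1$--$P_3$ at $j+1$) matches the paper, and your base case is fine; but the content of the dodge is where the proof lives, and your version of it leaves a genuine gap. Avoiding the forbidden zones of the at most two elements of $G_{g_j-1}$ meeting $B_{n_j}$ and planting $A_{n_j}$ in the largest leftover piece does not secure the inductive bounds. The crux is your assertion, in the verification of $P_2(j+1)$, that $B_{n_{j+1}-1}$ ``contains an element of $G_{g_j}$ of comparable size'': comparable, that is, to the intervals of generations $g_j,\dots,g_{j+1}-1$ that $B_{n_{j+1}}$ merely \emph{intersects}. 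Because the geometry is unbounded, this comparability does not follow from commensurability and the game rules; it has to be engineered by Alice's move. For instance, suppose $B_{n_j}$ straddles a boundary point $\partial^r J_\sigma=\partial^\ell J_{\sigma'}$ of two elements of $G_{g_j-1}$, the only element of $G_{g_j}$ it contains is a tiny child of $J_{\sigma'}$ accumulating at $\partial^\ell J_{\sigma'}$, and $\left| B_{n_j} \right|$ is only slightly larger than $\frac{1}{d_1}\max\left\{ \left| J_\sigma \right|, \left| J_{\sigma'} \right| \right\}$ --- all consistent with $P_1(j)$--$P_3(j)$. Your dodge deletes the zone at $\partial^\ell J_{\sigma'}$, and the largest leftover piece lies inside the rightmost child $J_{\sigma 1}$, not necessarily near its right endpoint. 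Then $B_{n_j+1}\subset A_{n_j}\subset J_{\sigma 1}$ already has commensurate generation $>g_j$, so $n_{j+1}=n_j+1$ and $B_{n_{j+1}-1}=B_{n_j}$, whose contained $G_{g_j}$-element is tiny, while the intervals of generation $g_{j+1}-1$ met by $B_{n_{j+1}}$ can have length of the order of $\left| J_{\sigma 1} \right|\geq C_5^{-1}\left| J_\sigma \right|$. The only estimate your argument provides is $\left| B_{n_{j+1}} \right|\geq\alpha\beta\left| B_{n_j} \right|\approx\frac{\alpha\beta}{d_1}\left| J_\sigma \right|$, which falls short of the requirement $\left| B_{n_{j+1}} \right|>\frac{1}{d_1}\left| J \right|$ by a factor of order $\alpha\beta$; since $\beta$ is arbitrary and the constants must be uniform in $j$, the choice of $d_1$ cannot absorb this. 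The same deficit undermines your $P_3(j+1)$ argument: your appeal to Lemma \ref{lem:accum} only disposes of offending left endpoints lying \emph{inside} $B_{n_{j+1}}$ (their zones then point out of it); excluding zones emanating from intermediate-generation left endpoints to the \emph{left} of $B_{n_{j+1}}$ requires precisely a lower bound of the form $\left| B_{n_{j+1}} \right|\geq 2^{\frac{1}{\gamma}}C_5^2 d_2^{-1}\left| J \right|$ for the ancestor $J$, which you have not established.

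What is missing is the finer positional information the paper builds into Alice's move through the two cases of the induction step. In Case 1, $A_{n_j}$ is not merely placed in a zone-free region: it is contained in the single parent $I=J_\sigma\in G_{g_j-1}$ \emph{and disjoint from the rightmost child} $J_{\sigma 1}$ (this is the point of the two subcases, and of the specific value $\alpha=2^{-2-\frac{1}{\gamma}}C_5^{-1}$, which guarantees that $J_{\sigma 2}$ is long enough to receive $A_{n_j}$). Disjointness from $J_{\sigma 1}$ forces $k_0\geq 2$ in Lemma \ref{lem:case1lem}, so every intermediate-generation interval met later is contained in a sibling $J_{\sigma k}$ with $k\geq k_0-1$, of length at most $2^{1+\frac{1}{\gamma}}C_5^2\left| J_{\sigma k_0} \right|\leq 2^{1+\frac{1}{\gamma}}C_5^2\left| B_{n_{j+1}-1} \right|$ by Proposition \ref{prop:C5} and the monotonicity of $k\mapsto\left| J_{\sigma k} \right|$; that is the ``comparable size'' statement, and it is a consequence of the placement, not of commensurability alone. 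In Case 2 (a left endpoint of generation $\leq g_j-1$ in the right half of $B_{n_j}$ --- exactly the configuration above) Alice plays \emph{flush against} that endpoint, so that $A_{n_j}$ is sandwiched as $J_{\kappa 1\tilde{\kappa}1}\subset A_{n_j}\subset J_{\kappa 1\tilde{\kappa}}$ in the chain of rightmost children; containing a whole basic interval comparable to the enveloping one is what yields $\left| B_{n_{j+1}} \right|\geq\beta C_5^{-1}\left| J \right|$ with a $j$-independent constant (Lemmas \ref{lem:case2lemsize} and \ref{lem:case2lem}). Your ``largest leftover subinterval'' prescription guarantees neither safeguard, so the induction as you have set it up does not close.
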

Note that the case $j=1$ was handled above.  Before proceeding to the induction step, we show how the claim implies the theorem.

Write $\{ \omega \} = \bigcap_{n=1}^\infty B_n$ and define $K := \lceil ( C_5 d_2 )^\gamma \rceil \geq 1$.  For any basic interval $J_\sigma$ of any generation we have
\begin{equation*}
\big( \partial^\ell J_\sigma, \partial^\ell J_\sigma + \tfrac{1}{d_2} | J_\sigma | \big) \supset \bigcup_{i=K+1}^\infty [ J_{\sigma i} )
\end{equation*}
by Proposition \ref{prop:C5}.  Also for any $n \in \mathbb{N} \cup \{ 0 \}$ we have $F^n \omega \in ( r_1, p_K )$ if and only if $\omega \in \bigcup_{i=K+1}^\infty J_{\sigma i}$ for some $J_\sigma \in G_n$.  The claim implies that the latter condition never holds; therefore the orbit of $\omega$ under $F$ stays outside $( r_1, p_K )$.  We conclude that
\begin{equation*}
\widetilde{\mathcal{E}_F} := \{ x \in [ r_1,1 ] \colon ( r_1, r_1+\epsilon ) \cap \{ F^n x \}_{n \geq 0} = \emptyset \textrm{ for some } \epsilon > 0 \}
\end{equation*}
is $( \alpha, \beta )$-strong winning.  As $\beta$ was arbitrary, $\widetilde{\mathcal{E}_F}$ is $\alpha$-strong winning.  Finally, the original set of interest, $\mathcal{E}_F$, is a co-countable subset of $\widetilde{\mathcal{E}_F}$ because
\begin{align*}
\mathcal{E}_F &= \{ x \in [ r_1,1 ] \colon [ r_1, r_1+\epsilon ) \cap \{ F^n x \}_{n \geq 0} = \emptyset \textrm{ for some } \epsilon > 0 \} \\
&= \widetilde{\mathcal{E}_F} \setminus \bigcup_{n=0}^\infty F^{-n} ( r_1 ).
\end{align*}
Therefore $\mathcal{E}_F$ is $\alpha$-strong winning (see Theorem \ref{thm:schmidt} and the paragraph following).

\subsection{Induction step of the claim} \label{subsec:inductionstep}
We will need the following result.
\begin{lem} \label{lem:1overbK}
	Fix a basic interval $J_\sigma$ of any generation.  Then
	\begin{equation*}
	\big[ \partial^\ell J_\sigma, \partial^\ell J_\sigma + \tfrac{1}{d_2} | J_\sigma | \big] \subset \big[ \partial^\ell J_\sigma, \partial^r J_{\sigma 3} \big).
	\end{equation*}
	Equivalently,
	\begin{equation*}
		\big[ \partial^\ell J_\sigma, \partial^\ell J_\sigma + \tfrac{1}{d_2} | J_\sigma | \big] \cap \big( J_{\sigma 1} \cup J_{\sigma 2} \big) = \emptyset.
	\end{equation*}
\end{lem}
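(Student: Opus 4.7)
Here is the plan. First observe that the two assertions of the lemma are equivalent: $J_{\sigma 1}$ and $J_{\sigma 2}$ are adjacent basic intervals of generation $\left| \sigma \right| + 1$ with $\partial^r J_{\sigma 1} = \partial^r J_\sigma$ and $\partial^\ell J_{\sigma 2} = \partial^r J_{\sigma 3}$, so $J_{\sigma 1} \cup J_{\sigma 2} = \left[ \partial^r J_{\sigma 3}, \partial^r J_\sigma \right]$. Disjointness of $\left[ \partial^\ell J_\sigma, \partial^\ell J_\sigma + \tfrac{1}{d_2} \left| J_\sigma \right| \right]$ from this set is exactly the strict inequality $\partial^\ell J_\sigma + \tfrac{1}{d_2} \left| J_\sigma \right| < \partial^r J_{\sigma 3}$, which is the content of the first claim; I would therefore prove only the first.

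The argument is a one-shot application of Corollary \ref{cor:kfromzeta}. Let $K$ be the unique positive integer satisfying $\partial^\ell J_\sigma + \tfrac{1}{d_2} \left| J_\sigma \right| \in \left[ J_{\sigma K} \right)$. Setting $\zeta = 1/d_2$ in that corollary yields
\[
K \;\geq\; \left( C_5 \zeta \right)^{-\gamma} - 1 \;=\; \left( d_2 / C_5 \right)^\gamma - 1.
\]
As long as $K \geq 3$, the right-to-left numbering of Definition \ref{def:Jsigma} places $\left[ J_{\sigma K} \right)$ inside $\left[ \partial^\ell J_\sigma, \partial^r J_{\sigma 3} \right)$, which will finish the proof.

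It remains to verify $K \geq 3$, equivalently $\left( d_2 / C_5 \right)^\gamma \geq 4$, or $d_2 \geq 4^{1/\gamma} C_5$. The standing lower bound $d_2 > 2^{1 + \frac{2}{\gamma}} C_5^4 \left( \alpha \beta \right)^{-1}$ delivers this with room to spare, since $\left( \alpha \beta \right)^{-1} \geq 1$ and $C_5^3 \geq 1$ give
\[
d_2 \;>\; 2^{1 + \frac{2}{\gamma}} C_5 \;=\; 2 \cdot 4^{1/\gamma}\, C_5 \;>\; 4^{1/\gamma}\, C_5.
\]
There is no real obstacle in this proof: the lemma is a direct quantitative consequence of Corollary \ref{cor:kfromzeta} together with the explicit choice of $d_2$. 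The only points requiring care are the right-to-left indexing convention for the branches and the half-open notation $\left[ J_{\sigma K} \right)$, but both are easily tracked.
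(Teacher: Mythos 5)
Your proof is correct and follows essentially the same route as the paper: apply Corollary \ref{cor:kfromzeta} with $\zeta = 1/d_2$ to get a lower bound on $K$, then use the standing bound $d_2 > 2^{1+\frac{2}{\gamma}} C_5^4 \left( \alpha\beta \right)^{-1}$ (the paper compares against $3^{1/\gamma} C_5$ rather than your $4^{1/\gamma} C_5$, an immaterial difference) to conclude $K \geq 3$. The only cosmetic slip is the word ``equivalently'' before $\left( d_2/C_5 \right)^\gamma \geq 4$, which is a sufficient condition rather than an equivalence, but the logic is unaffected.
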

\begin{proof}
	Let $K$ be the unique integer such that $\partial^\ell J_\sigma + d_2^{-1} | J_\sigma | \in [ J_{\sigma K} )$.  Using Corollary \ref{cor:kfromzeta} we find that
	\begin{equation*}
	K + 1 \geq \bigg( \frac{C_5}{d_2} \bigg)^{-\gamma} > \bigg( \frac{C_5}{2^{1+2/\gamma} C_5^6 ( \alpha \beta )^{-1}} \bigg)^{-\gamma} > \bigg( \frac{C_5}{3^{1/\gamma} C_5} \bigg)^{-\gamma} = 3.
	\end{equation*}
	Hence $K \geq 3$.
\end{proof}

Now we begin the induction.  Assume that for some $j \in \mathbb{N}$ statements $P_1 ( j )$, $P_2 ( j )$, and $P_3 ( j )$ hold.  By Lemma \ref{lem:accum}, $B_{n_j}$ contains at most one left endpoint of generation at most $g_j-1$.  Let $B_{n_j}^\textrm{mid}$ denote the midpoint of $B_{n_j}$.  We consider two cases, according as to whether the interval $\big. \big( B_{n_j}^\textrm{mid}, \partial^r B_{n_j} \big. \big]$ contains a left endpoint of generation at most $g_j-1$.

\subsection*{Case 1: The interval $\bm{\big( B_{n_j}^\textrm{mid}, \partial^r B_{n_j} \big]}$ does not contain a left endpoint of generation at most $\bm{g_j-1}$}
\begin{figure}
	\centering
	\includegraphics[width=\textwidth]{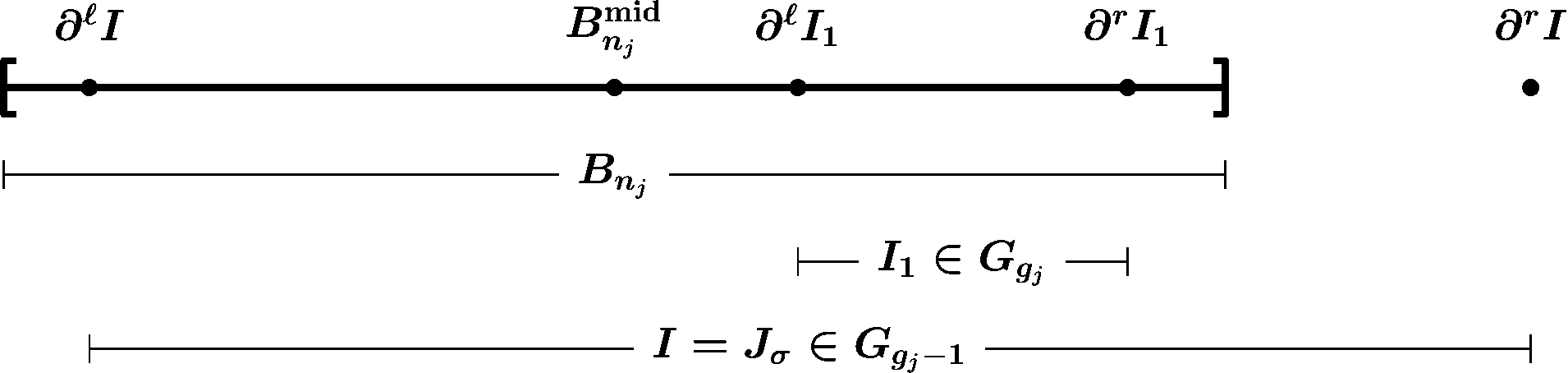}
	\caption{One possibility for Case 1 of the induction step.}
	\label{fig:case1}
\end{figure}

We refer the reader to Figure \ref{fig:case1}.  Because $B_{n_j}$ is c.w.g.\ $g_j$, $B_{n_j}$ contains some basic interval of generation $g_j$.  Let $I_1$ be the rightmost basic interval of generation $g_j$ contained in $B_{n_j}$, and let $I$ denote the unique basic interval of generation $g_j-1$ containing $I_1$ by Observation \ref{obs1}.  Then $\partial^\ell I \leq B_{n_j}^\textrm{mid}$.  Note that $\partial^\ell I$ could be inside or outside $B_{n_j}$.

Next, we claim that $\partial^r I > \partial^r B_{n_j}$.  To see this, first note that $\partial^r I \geq \partial^\ell B_{n_j}$ because $I_1$, and hence $I$, intersects $B_{n_j}$.  Next we have that $\partial^r I \geq \partial^r B_{n_j}$, for otherwise the interval $\big( \partial^r I, \partial^r B_{n_j} \big) \subset B_{n_j}$ would contain a member of $G_{g_j}$ to the right of $I_1$ by Observation \ref{obs6}.  Finally, if $\partial^r I = \partial^r B_{n_j}$, then $\partial^r B_{n_j} \leq \partial^r B_2 < 1$ and hence $\partial^r I \in \big. \big( B_{n_j}^\textrm{mid}, \partial^r B_{n_j} \big. \big]$ would be the left endpoint of some basic interval of generation at most $g_j-1$.  This proves the claim.

Write $I = J_\sigma$ for some string $\sigma$ of length $g_j-1$.  In order to specify Alice's strategy in choosing $A_{n_j}$ we consider two subcases, according as to whether $\partial^\ell J_{\sigma 1} \leq \partial^r B_{n_j}$.

\textit{Subcase 1: $\partial^\ell J_{\sigma 1} > \partial^r B_{n_j}$.} See Figure \ref{fig:case1subcase1}.  Alice chooses
\begin{equation*}
A_{n_j} = \big[ \partial^r B_{n_j} - \alpha \big| B_{n_j} \big|, \partial^r B_{n_j} \big] \subset B_{n_j}.
\end{equation*}
Using the induction hypothesis $P_2 ( j )$ we find that
\begin{align*}
\partial^r B_{n_j} - \bigg( \partial^\ell I + \frac{1}{d_2} | I | \bigg) &\geq \frac{1}{2} \big| B_{n_j} \big| - \frac{1}{d_2} | I | > \frac{1}{2} \big| B_{n_j} \big| - \frac{d_1}{d_2} \big| B_{n_j} \big| \\
&> \bigg( \frac{1}{2} - \frac{d_1}{2d_1 ( 1-2\alpha )^{-1}} \bigg) \big| B_{n_j} \big| = \alpha \big| B_{n_j} \big|.
\end{align*}
This shows that $A_{n_j}$ is disjoint from $\big( \partial^\ell I, \partial^\ell I + d_2^{-1} | I | \big)$.  Also $A_{n_j}$ is disjoint from $J_{\sigma 1}$ because $\partial^r A_{n_j} = \partial^r B_{n_j} < \partial^\ell J_{\sigma 1}$.  Finally, because $\alpha < \frac{1}{2}$ we have $A_{n_j} \subset \big[ B_{n_j}^\textrm{mid}, \partial^r B_{n_j} \big] \subset I$ so that $A_{n_j}$ is disjoint from every element of $G_{g_j-1} \setminus \{I\}$.

\begin{figure}
	\centering
	\includegraphics[width=\textwidth]{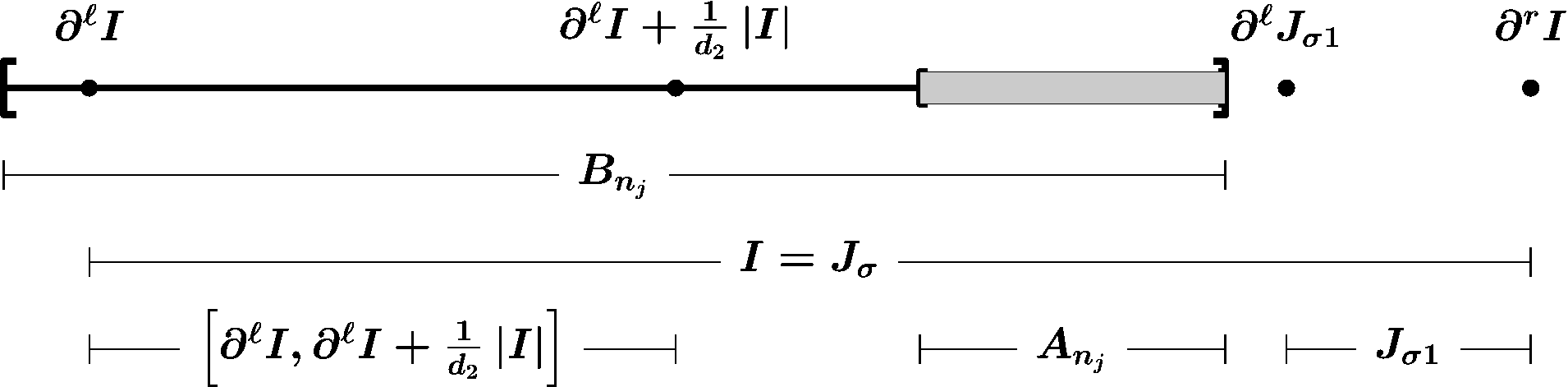}
	\caption{Case 1, Subcase 1 of the induction step.}
	\label{fig:case1subcase1}
\end{figure}

\textit{Subcase 2: $\partial^\ell J_{\sigma 1} \leq \partial^r B_{n_j}$.} See Figure \ref{fig:case1subcase2}.  In this case $B_{n_j}$ must contain $J_{\sigma 2}$ since otherwise $B_{n_j}$ would not contain any member of $G_{g_j}$.  Also $\big( \partial^\ell I, \partial^\ell I + d_2^{-1} | I | \big)$ is disjoint from $J_{\sigma 2}$ by Lemma \ref{lem:1overbK}.  Furthermore, by Proposition \ref{prop:C5},
\begin{align*}
| J_{\sigma 2} | &\geq 2^{-1-1/\gamma} C_5^{-1} | I | > 2^{-1-1/\gamma} C_5^{-1} \big| \big[ B_{n_j}^\textrm{mid}, \partial^r B_{n_j} \big] \big| \\
&= 2^{-2-1/\gamma} C_5^{-1} \big| B_{n_j} \big| = \alpha \big| B_{n_j} \big|.
\end{align*}
Thus, as in the previous subcase, Alice may choose $A_{n_j} \subset J_{\sigma 2} \subset I$ to be disjoint from $\big( \partial^\ell I, \partial^\ell I + d_2^{-1} |I| \big)$, $J_{\sigma 1}$, and every element of $G_{g_j-1} \setminus \{ I \}$.

\begin{figure}
	\centering
	\includegraphics[width=\textwidth]{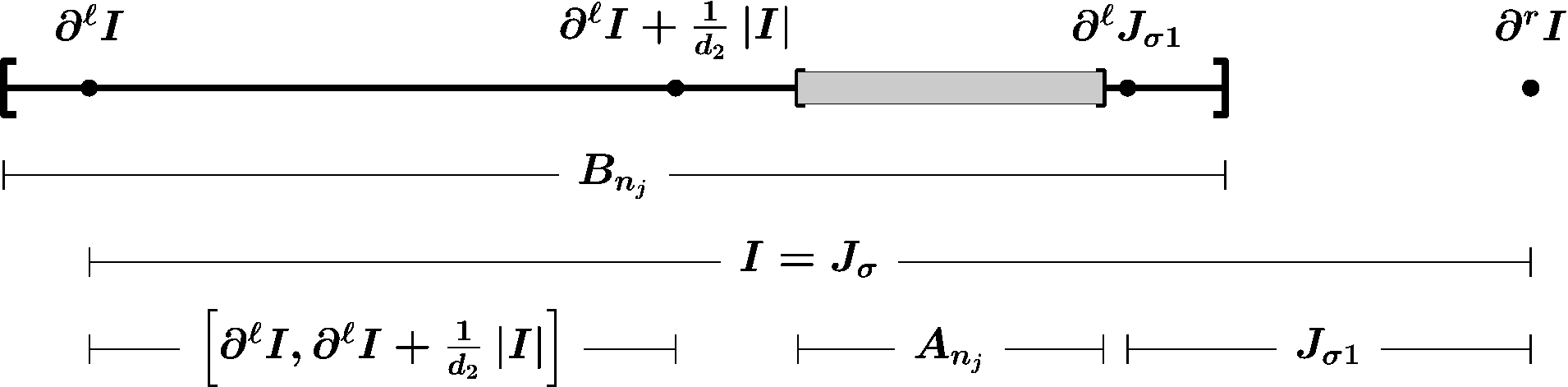}
	\caption{Case 1, Subcase 2 of the induction step.}
	\label{fig:case1subcase2}
\end{figure}

This takes care of the two subcases.  Now Bob chooses $B_{n_j+1}$.  If $B_{n_j+1}$ is c.w.g.\ $g_j$, Alice plays arbitrarily until Bob chooses an interval c.w.g.\ $g_{j+1} > g_j$.  This will eventually happen because $A_{n_j}$ contains finitely many members of $G_{g_j}$ (since $\partial^\ell I \notin A_{n_j}$) and Alice can force $| B_n | \searrow 0$ by always choosing an interval $A_n$ of length $\alpha | B_n |$; hence $B_n$ will eventually be too small to contain a member of $G_{g_j}$.

Let $n_{j+1}$ be such that $B_{n_{j+1}-1}$ is c.w.g.\ $g_j$ and $B_{n_{j+1}}$ is c.w.g.\ $g_{j+1} > g_j$.  Define
\begin{equation*}
\mathcal{J} := \Bigg\{ J \in \bigcup_{g=g_j}^{\mathclap{g_{j+1}-1}} G_g \colon J \cap B_{n_{j+1}} \neq \emptyset \Bigg\}.
\end{equation*}
Observe that every $J \in \mathcal{J}$ is contained in $J_\sigma$ because $B_{n_{j+1}}$ is disjoint from every element of $G_{g_j-1} \setminus \{ J_\sigma \}$.

\begin{lem} \label{lem:case1lem}
	We have
	\begin{equation*}
	\big| B_{n_{j+1}} \big| \geq 2^{-1-1/\gamma} \alpha \beta C_5^{-4} | J | > 2^{1/\gamma} C_5^2 d_2^{-1} | J |
	\end{equation*}
	for all $J \in \mathcal{J}$.
\end{lem}
\begin{proof}
	First observe that every $J \in \mathcal{J}$ is contained in some element of $G_{g_j} \cap \mathcal{J}$, and so it suffices to verify the lemma when $J \in G_{g_j} \cap \mathcal{J}$.  Because $B_{n_{j+1}-1} \subset I$ is c.w.g.\ $g_j$ we may define
	\[
	k_0 := \min \big\{ k \colon J_{\sigma k} \subset B_{n_{j+1}-1} \big\}.
	\]
	Then $k_0 \geq 2$ by the choice of $A_{n_j}$ (or because $\partial^r I \notin B_{n_j}$ if $B_{n_{j+1}-1} = B_{n_j}$).  By the definition of $k_0$ we have
	\[
	k_0-1 = \min \big\{ k \colon J_{\sigma k} \cap B_{n_{j+1}-1} \neq \emptyset \big\} \leq \min \{ k \colon J_{\sigma k} \in \mathcal{J} \}.
	\]
	Hence, if $J=J_{\sigma k}\in G_{g_j}\cap\mathcal{J}$, then $k\geq k_0-1$, and Proposition \ref{prop:C5} gives
	\[
	|J|=|J_{\sigma k}|
	\leq C_5 k^{-1-1/\gamma}|J_\sigma|
	\leq C_5 (k_0-1)^{-1-1/\gamma}|J_\sigma|
	\leq C_5^2 |J_{\sigma(k_0-1)}|.
	\]
	Using Proposition \ref{prop:C5} again, we have
	\begin{align*}
		\frac{ \big| B_{n_{j+1}} \big|}{|J|}
		&\geq \alpha \beta \frac{\big| B_{n_{j+1}-1} \big|}{C_5^2\big| J_{\sigma ( k_0-1 )} \big|}
		\geq \alpha \beta \frac{| J_{\sigma k_0} |}{C_5^2\big| J_{\sigma ( k_0-1 )} \big|} \\
		&\geq \alpha \beta \frac{C_5^{-1} k_0^{-1-1/\gamma}}{C_5^3 ( k_0-1 )^{-1-1/\gamma}}
		\geq 2^{-1-1/\gamma} \alpha \beta C_5^{-4} \\
		&= \frac{2^{1/\gamma} C_5^2}{2^{1+2/\gamma} C_5^6 ( \alpha \beta )^{-1}} > \frac{2^{1/\gamma} C_5^2}{d_2}. \qedhere
	\end{align*}
\end{proof}

\begin{cor} \label{cor:case1cor}
	The interval $B_{n_{j+1}}$ is disjoint from $\big( \partial^\ell J, \partial^\ell J + d_2^{-1} | J | \big)$ for all $J \in \bigcup_{g=0}^{g_{j+1}-2} G_g$.
\end{cor}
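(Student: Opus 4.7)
The plan is, for each $J \in G_g$ with $g \leq g_{j+1}-2$, to reduce to the case $J = \tilde J^{(g)}$, the unique element of $G_g$ properly containing $B_{n_{j+1}}$. Such $\tilde J^{(g)}$ exists by Corollary \ref{cor:gnminus2} applied at generation $g_{j+1}-2$ together with Observation \ref{obs2} (nesting). Any other $J' \in G_g$ has interior disjoint from $\tilde J^{(g)}$ by Observation \ref{obs3}, hence meets $B_{n_{j+1}} \subset \tilde J^{(g)}$ in at most a single point on $\partial \tilde J^{(g)}$; invoking Lemma \ref{lem:accum} with the strict inequality $g < g_{j+1}-1$ forces any such shared point, when it exists, to equal $\partial^r B_{n_{j+1}}$, and then the forbidden zone of $J'$, being open at its left endpoint, lies strictly to the right of $B_{n_{j+1}}$.

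After this reduction, the range $g \leq g_j-2$ is immediate: the inductive hypothesis $P_3(j)$ applied to $\tilde J^{(g)}$ gives disjointness from $B_{n_j}$, and $B_{n_{j+1}} \subset B_{n_j}$ inherits it. For $g = g_j-1$ we have $\tilde J^{(g_j-1)} = J_\sigma$, and Alice's construction of $A_{n_j}$ was tailored to avoid $\bigl(\partial^\ell J_\sigma, \partial^\ell J_\sigma + \tfrac{1}{d_2}|J_\sigma|\bigr)$, so $B_{n_{j+1}} \subset A_{n_j}$ inherits the disjointness. The substantive range is $g_j \leq g \leq g_{j+1}-2$. Here Lemma \ref{lem:accum} again excludes $\partial^\ell \tilde J^{(g)}$ from $B_{n_{j+1}}$ (otherwise it would equal $\partial^r B_{n_{j+1}}$ and collapse $B_{n_{j+1}}$ to a point), so $\partial^\ell B_{n_{j+1}}$ lies in some unique child $\tilde J^{(g)}_{m^*}$, and the required disjointness reduces to the spatial estimate
\[
\left| \bigcup_{i > m^*} \tilde J^{(g)}_i \right| = \partial^\ell \tilde J^{(g)}_{m^*} - \partial^\ell \tilde J^{(g)} \geq \tfrac{1}{d_2} \left| \tilde J^{(g)} \right|,
\]
which by the first bound of Proposition \ref{prop:C5} is implied by the clean inequality $m^* + 1 \leq (d_2/C_5)^\gamma$.

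To control $m^*$ I apply Lemma \ref{lem:case1lem} to $\tilde J^{(g)} \in \mathcal{J}$ itself to obtain a lower bound on $|B_{n_{j+1}}|$ as a definite multiple of $|\tilde J^{(g)}|$, then bound $|B_{n_{j+1}}|$ from above using that $B_{n_{j+1}}$ meets at most two adjacent children of $\tilde J^{(g)}$ (Corollary \ref{cor:atmost2}) and that $k \mapsto |\tilde J^{(g)}_k|$ is strictly decreasing; the second bound of Proposition \ref{prop:C5} then yields a polynomial bound of the form $m^* = O\bigl((\alpha\beta)^{-\gamma/(\gamma+1)}\bigr)$. The main obstacle is reconciling this with the target $m^* + 1 \leq (d_2/C_5)^\gamma$: the choice $d_2 > 2^{1+2/\gamma} C_5^4 (\alpha\beta)^{-1}$ is engineered precisely so that $(d_2/C_5)^\gamma$ grows like $(\alpha\beta)^{-\gamma}$, and the exponent asymmetry $\gamma > \gamma/(\gamma+1)$ combined with $\alpha\beta < 1$ ensures the target dominates. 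The trickiest sub-configuration is $g = g_{j+1}-2$ where $B_{n_{j+1}}$ may straddle two children of $\tilde J^{(g)}$ rather than sit inside one; this introduces an extra factor of $2$ in the upper bound on $m^*$ that is comfortably absorbed by the margin built into $d_2$.
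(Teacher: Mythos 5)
Your reduction steps mirror the paper's: $P_3(j)$ disposes of generations up to $g_j-2$, the construction of $A_{n_j}$ (disjoint from $\big( \partial^\ell I, \partial^\ell I + \tfrac{1}{d_2} |I| \big)$ and from every element of $G_{g_j-1}\setminus\{I\}$) disposes of generation $g_j-1$, and Observation \ref{obs1} with Corollary \ref{cor:gnminus2} reduces each remaining generation $g$ to the unique $J'\in G_g$ containing $B_{n_{j+1}}$. Where you genuinely diverge is the substantive range $g_j\le g\le g_{j+1}-2$. The paper never bounds $\left|B_{n_{j+1}}\right|$ from above: it locates $K$ with $\partial^\ell J+\tfrac{1}{d_2}|J|\in\left.\left[J_{\tau K}\right.\right)$, shows via Corollary \ref{cor:kfromzeta} and Proposition \ref{prop:C5} that $\big|\bigcup_{i\ge K-1}J_{\tau i}\big|\le 2^{\frac{1}{\gamma}}C_5^2 d_2^{-1}|J|$, and argues that if $\partial^\ell B_{n_{j+1}}$ lay in the forbidden zone then $B_{n_{j+1}}$, being longer than that entire tail by Lemma \ref{lem:case1lem}, would contain $J_{\tau(K-1)}\in G_{g+1}$, contradicting commensurability with $g_{j+1}$. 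You instead bound the index $m^*$ of the child containing $\partial^\ell B_{n_{j+1}}$, playing the lower bound of Lemma \ref{lem:case1lem} against the upper bound $\left|B_{n_{j+1}}\right|\le\big|\tilde J_{m^*}\big|+\big|\tilde J_{m^*-1}\big|$, and then check that child $m^*$ sits to the right of the forbidden zone. This dual formulation is legitimate --- it exploits commensurability through ``meets at most two children'' rather than ``cannot contain a child'' --- at the cost of an extra upper bound and the exponent $\gamma/(\gamma+1)$, which the paper's leaner argument avoids.

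Two points need tightening. First, Corollary \ref{cor:atmost2} literally controls only intersections with $G_{g_{j+1}-1}$; for $g+1<g_{j+1}-1$ you must rerun the argument (three adjacent children would force $B_{n_{j+1}}$ to contain the middle one, an element of $G_{g+1}$, which via Observation \ref{obs6} contradicts commensurability with $g_{j+1}$), or argue via Lemma \ref{lem:accum} as you do for endpoints. Second, and more seriously, your closing comparison is asserted rather than proved: ``exponent asymmetry plus $\alpha\beta<1$'' is not by itself enough, since for $V:=C_5^3(\alpha\beta)^{-1}$ near $1$ the inequality would be borderline; what makes it work is the specific value $\alpha=2^{-2-\frac{1}{\gamma}}C_5^{-1}$, which forces $V>2^{2+\frac{1}{\gamma}}$, together with $d_2>2^{1+\frac{2}{\gamma}}C_5^4(\alpha\beta)^{-1}$. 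Concretely, your chain gives $m^*-1\le\big(2^{2+\frac{1}{\gamma}}V\big)^{\frac{\gamma}{\gamma+1}}=2^{\frac{2\gamma+1}{\gamma+1}}V^{\frac{\gamma}{\gamma+1}}$, while $(d_2/C_5)^\gamma>2^{\gamma+2}V^\gamma$; using $V>2^{\frac{2\gamma+1}{\gamma}}$ one checks $2^{\gamma+2}V^\gamma\ge 2\cdot 2^{\frac{2\gamma+1}{\gamma+1}}V^{\frac{\gamma}{\gamma+1}}$ and $2^{\gamma+2}V^\gamma>4$, whence $m^*+1\le(d_2/C_5)^\gamma$ as required, with the factor $2$ from straddling two children and the ``$+2$'' absorbed. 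So your route does close, but this verification must be written out; as it stands the final step is a gap in rigor (an unverified constant comparison), not in substance.
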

\begin{proof}
	$P_3 ( j )$ is true by the induction hypothesis; therefore it suffices to consider $J \in \bigcup_{g=g_j-1}^{g_{j+1}-2} G_g$.  Also $B_{n_{j+1}} \subset A_{n_j}$, $A_{n_j}$ is disjoint from $\big( \partial^\ell I, \partial^\ell I + d_2^{-1} | I | \big)$, and $I$ is the only element of $G_{g_j-1}$ that intersects $\big( A_{n_j} \big)^\circ$.  So it suffices to consider $J \in \bigcup_{g=g_j}^{g_{j+1}-2} G_g$.
	
	Fix such a $J = J_\tau \in G_{g'}$, where $g_j \leq g' \leq g_{j+1}-2$.  Using Observation \ref{obs1} and Corollary \ref{cor:gnminus2}, let $J'$ be the unique element of $G_{g'}$ containing $B_{n_{j+1}}$.  If $J \neq J'$, then $B_{n_{j+1}}$ is disjoint from the interior of $J$ and we are done.  So suppose $J = J'$.
	
	Find the unique $K \in \mathbb{N}$ such that $\partial^\ell J + d_2^{-1} | J | \in [ J_{\tau K} )$.  By Lemma \ref{lem:1overbK}, $K-1 \geq 2$, and by Corollary \ref{cor:kfromzeta}, $K-1 \geq \big( C_5 d_2^{-1} \big)^{-\gamma} - 2$.  So by Proposition \ref{prop:C5},
	\begin{align*}
	\frac{\big| \bigcup_{i=K-1}^\infty J_{\tau i} \big|}{| J |} &\leq C_5 ( K-1 )^{-1/\gamma} \leq C_5 \bigg( \bigg( \frac{C_5}{d_2} \bigg)^{-\gamma}-2 \bigg)^{-1/\gamma}  \\
	&\leq C_5 \bigg( \frac{1}{2} \bigg( \frac{C_5}{d_2} \bigg)^{-\gamma} \bigg)^{-1/\gamma} = \frac{2^{1/\gamma} C_5^2}{d_2}.
	\end{align*}
	Therefore, if $\partial^\ell B_{n_{j+1}}$ were contained in $\big[ \partial^\ell J, \partial^\ell J + d_2^{-1} | J | \big)$, then $B_{n_{j+1}}$ would contain $J_{\tau ( K-1 )} \in G_{g'+1}$ by Lemma \ref{lem:case1lem}.  But this is not possible because $B_{n_{j+1}}$ is c.w.g.\ $g_{j+1} > g'+1$.
\end{proof}

In conclusion, $P_1 ( j+1 )$ is true by construction, Lemma \ref{lem:case1lem} implies $P_2 ( j+1 )$ because $d_1^{-1} < 2^{-1-1/\gamma} \alpha \beta C_5^{-4}$, and Corollary \ref{cor:case1cor} is the statement $P_3 ( j+1 )$.  This completes the analysis of Case 1.

\subsection*{Case 2: The interval $\bm{\big( B_{n_j}^\textrm{mid}, \partial^r B_{n_j} \big]}$ contains a left endpoint of generation at most $\bm{g_j-1}$}
\begin{figure}
	\centering
	\includegraphics[width=\textwidth]{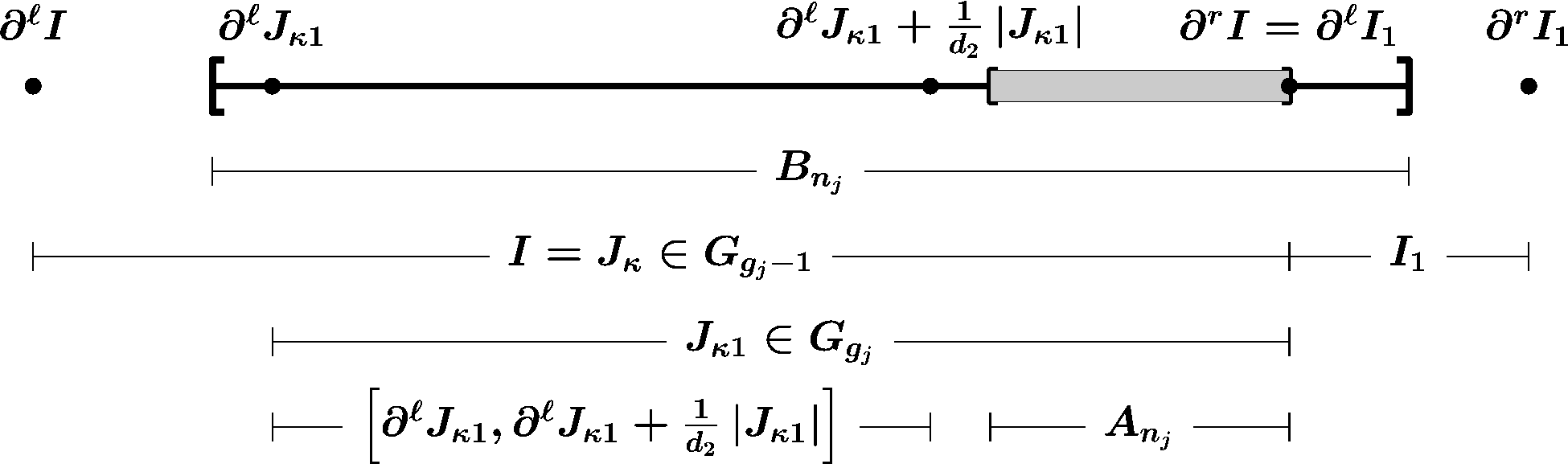}
	\caption{Case 2 of the induction step.}
	\label{fig:case2}
\end{figure}

We refer the reader to Figure \ref{fig:case2}.  Let $I_1$ be a basic interval of generation at most $g_j-1$ with left endpoint in $\big. \big( B_{n_j}^\textrm{mid}, \partial^r B_{n_j} \big. \big]$.  Then there is some basic interval of generation at most $g_j-1$ with right endpoint $\partial^\ell I_1$ by Observation \ref{obs4}; hence there is some $I = J_{\kappa} \in G_{g_j-1}$ having right endpoint $\partial^\ell I_1$.  Note that $\partial^\ell I < \partial^\ell B_{n_j}$ since $\partial^r I \in B_{n_j}$ and $B_{n_j}$ is c.w.g.\ $g_j$.  Alice chooses $A_{n_j} = \big[ \partial^r I - \alpha \big| B_{n_j} \big|, \partial^r I \big]$.  Using Proposition \ref{prop:C5} we have
\begin{align*}
\bigg| \bigg[ \partial^\ell J_{\kappa 1} + \frac{1}{d_2} | J_{\kappa 1} |,\partial^r I \bigg] \bigg| &\geq C_5^{-1} | I | \bigg( 1- \frac{1}{d_2} \bigg) \\
&\geq C_5^{-1} \big| \big[ \partial^\ell B_{n_j}, B_{n_j}^\textrm{mid} \big] \big| \bigg( 1 - \frac{1}{d_2} \bigg) \\
&> \frac{1}{4} C_5^{-1} \big| B_{n_j} \big| > \alpha \big| B_{n_j} \big|,
\end{align*}
which shows that $A_{n_j} \subset J_{\kappa 1}$ and moreover, that $A_{n_j}$ is disjoint from the interval $\big[ \partial^\ell J_{\kappa 1}, \partial^\ell J_{\kappa 1} + d_2^{-1} | J_{\kappa 1} | \big]$.  Thus $A_{n_j}$ is disjoint from all intervals $\big[ \partial^\ell J, \partial^\ell J + d_2^{-1} | J | \big]$ where $J \in G_{g_j}$.

Let $A_{n_j}$ be c.w.g.\ $\tilde{g} > g_j$.  Then by the choice of $A_{n_j}$, $J_{\kappa 1 \tilde{\kappa} 1} \subset A_{n_j} \subset J_{\kappa 1 \tilde{\kappa}}$, where $\tilde{\kappa}$ is a string of $\tilde{g}-g_j-1$ repeating ones.  Now Bob chooses $B_{n_j+1}$.  Define $n_{j+1} := n_j + 1$ and let $B_{n_{j+1}}$ be c.w.g.\ $g_{j+1} \geq \tilde{g}$.

\begin{lem} \label{lem:case2lemsize}
	We have
	\begin{equation*}
	\big| B_{n_{j+1}} \big| \geq \beta C_5^{-1} | J | > \frac{1}{d_1} | J |
	\end{equation*}
	for all $J \in G_{g_{j+1}-1}$ that intersect $B_{n_{j+1}}$.
\end{lem}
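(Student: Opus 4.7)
The plan is to establish the containment $J \subset J_{\kappa 1 \tilde{\kappa}}$ for every basic interval $J \in G_{g_{j+1}-1}$ that meets $B_{n_{j+1}}$, after which the size estimate follows rapidly. Given containment, Proposition \ref{prop:C5} applied to $\sigma = \kappa 1 \tilde{\kappa}$ with $k=1$ yields $|J_{\kappa 1 \tilde{\kappa} 1}| \geq C_5^{-1} |J_{\kappa 1 \tilde{\kappa}}|$, and since $J_{\kappa 1 \tilde{\kappa} 1} \subset A_{n_j}$ we deduce $|A_{n_j}| \geq C_5^{-1} |J_{\kappa 1 \tilde{\kappa}}| \geq C_5^{-1} |J|$. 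Combined with $|B_{n_{j+1}}| \geq \beta |A_{n_j}|$ (the strong-winning length rule), this gives $|B_{n_{j+1}}| \geq \beta C_5^{-1} |J|$. The strict inequality $\beta C_5^{-1} > d_2^{-1}$ will follow immediately from the choice $d_2 > 2^{1+\frac{2}{\gamma}} C_5^4 (\alpha \beta)^{-1} > C_5/\beta$.

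The bulk of the work is in the containment. Since $B_{n_{j+1}} \subset A_{n_j} \subset J_{\kappa 1 \tilde{\kappa}}$, the interval $J$ meets $J_{\kappa 1 \tilde{\kappa}}$, so by Observation (ii) they are either nested or interior-disjoint. Strict containment $J \supsetneq J_{\kappa 1 \tilde{\kappa}}$ is ruled out by $g_{j+1}-1 \geq \tilde{g}-1$; equality of generations either forces $J = J_{\kappa 1 \tilde{\kappa}}$ (which is itself the desired containment) or drops us into interior-disjointness. In the remaining interior-disjoint case, the shared boundary point must lie in $B_{n_{j+1}}$ and equal either $\partial^\ell J_{\kappa 1 \tilde{\kappa}}$ or $\partial^r J_{\kappa 1 \tilde{\kappa}} = \partial^r J_\kappa$.

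The hard part will be ruling these two boundary cases out. The left-endpoint case is easy: $A_{n_j}$ is c.w.g.\ $\tilde{g}$, so contains no element of $G_{\tilde{g}-1}$, hence $\partial^\ell J_{\kappa 1 \tilde{\kappa}} \notin A_{n_j} \supset B_{n_{j+1}}$. The right-endpoint case is more delicate. I would argue that $\partial^r J_\kappa$ is a left endpoint only of generation $s$, where $s \leq g_j-1$ is obtained by trimming trailing $1$'s from $\kappa$, and fails to be a left endpoint of any higher generation because the children of a basic interval only accumulate at — but never attain — its left endpoint. This forbids $\partial^\ell J = \partial^r J_\kappa$ at generation $g_{j+1}-1 > g_j-1 \geq s$. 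If instead $\partial^r J = \partial^r J_\kappa$, then $J$ must be a rightmost-branch descendant $J_{\kappa 1^{g_{j+1}-g_j}}$ of $J_\kappa$; since $g_{j+1} \geq \tilde{g}$, this descendant sits inside $J_{\kappa 1^{\tilde{g}-g_j}} = J_{\kappa 1 \tilde{\kappa}}$, contradicting interior-disjointness. The bookkeeping around which right endpoints $\partial^r J_\kappa$ resurface as left endpoints of which later generations is the main technical obstacle.
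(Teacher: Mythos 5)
Your proof is correct and follows essentially the same route as the paper: the key chain $\left| B_{n_{j+1}} \right| \geq \beta \left| A_{n_j} \right| \geq \beta \left| J_{\kappa 1 \tilde{\kappa} 1} \right| \geq \beta C_5^{-1} \left| J_{\kappa 1 \tilde{\kappa}} \right| \geq \beta C_5^{-1} \left| J \right|$ is exactly the paper's, and your nestedness/endpoint analysis establishing $J \subset J_{\kappa 1 \tilde{\kappa}}$ is a more explicit version of the paper's case split $g_{j+1} = \tilde{g}$ versus $g_{j+1} > \tilde{g}$ together with Corollaries \ref{cor:atmost2} and \ref{cor:gnminus2}. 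One small point to tighten: to rule out $\partial^\ell J_{\kappa 1 \tilde{\kappa}} \in A_{n_j}$ you should also invoke $\partial^r A_{n_j} = \partial^r I = \partial^r J_{\kappa 1 \tilde{\kappa}}$, so that containing the left endpoint would force $A_{n_j} = J_{\kappa 1 \tilde{\kappa}} \in G_{\tilde{g}-1}$, which is what actually contradicts $A_{n_j}$ being c.w.g.\ $\tilde{g}$.
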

\begin{proof}
	If $g_{j+1} = \tilde{g}$, then the only basic interval of generation $g_{j+1}-1$ intersecting $B_{n_{j+1}}$ is $J_{\kappa 1 \tilde{\kappa}}$, and by Proposition \ref{prop:C5} we have
	\begin{equation*}
	\big| B_{n_{j+1}} \big| \geq \beta \big| A_{n_j} \big| \geq \beta | J_{\kappa 1 \tilde{\kappa} 1} | \geq \beta C_5^{-1} | J_{\kappa 1 \tilde{\kappa}} | > \frac{1}{d_1} | J_{\kappa 1 \tilde{\kappa}} |.
	\end{equation*}
	On the other hand, if $g_{j+1} > \tilde{g}$, then there are at most two basic intervals of generation $g_{j+1}-1$ intersecting $B_{n_{j+1}}$ by Corollaries \ref{cor:atmost2} and \ref{cor:gnminus2}.  If there is one, call it $J_{\tau t}$; if there are two, call them $J_{\tau t}$ and $J_{\tau (t+1)}$.  Both $J_{\tau t}$ and $J_{\tau (t+1)}$ are contained in $J_{\kappa 1 \tilde{\kappa}}$.  Thus, borrowing from the calculation above,
	\begin{align*}
	\big| B_{n_{j+1}} \big| &\geq \beta C_5^{-1} | J_{\kappa 1 \tilde{\kappa}} | > \beta C_5^{-1} \max \{ | J_{\tau (t+1)} |, | J_{\tau t} | \} \\
	&> \frac{1}{d_1} \max \{ | J_{\tau (t+1)} |, | J_{\tau t} | \}. \qedhere
	\end{align*}
\end{proof}

\begin{lem} \label{lem:case2lem}
	The interval $B_{n_{j+1}}$ is disjoint from $\big( \partial^\ell J, \partial^\ell J + d_2^{-1} | J | \big)$ for all $J \in \bigcup_{g=0}^{g_{j+1}-2} G_g$.
\end{lem}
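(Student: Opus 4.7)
The plan is to mirror Corollary \ref{cor:case1cor}, partitioning the range $0 \leq g \leq g_{j+1}-2$ into subranges that exploit the deep nesting $J_{\kappa 1 \tilde{\kappa} 1} \subset A_{n_j} \subset J_{\kappa 1 \tilde{\kappa}}$ forced by Case 2's choice of Alice. The low range $g \leq g_j - 2$ is immediate from $P_3(j)$ since $B_{n_{j+1}} \subset B_{n_j}$. For $g = g_j - 1$, only $I = J_\kappa$ can have its interior meet $A_{n_j}$, and Lemma \ref{lem:1overbK} combined with $A_{n_j} \subset J_{\kappa 1}$ makes $A_{n_j}$ disjoint from the forbidden left interval $\big(\partial^\ell I, \partial^\ell I + \tfrac{1}{d_2}|I|\big)$; the right-adjacent element of $G_{g_j-1}$ has its forbidden interval lying strictly to the right of $\partial^r A_{n_j} = \partial^r I$, and all other elements of $G_{g_j-1}$ miss $A_{n_j}$ entirely.

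In the middle range $g_j \leq g \leq \tilde{g}-2$, the unique element of $G_g$ containing $A_{n_j}$ is $J_\tau$ with $\tau = \kappa 1^{g-g_j+1}$. Because the tail of $\sigma := \kappa 1 \tilde{\kappa}$ consists entirely of $1$'s, $\tau 1$ is a prefix of $\sigma$, so $A_{n_j} \subset J_\sigma \subset J_{\tau 1}$ and Lemma \ref{lem:1overbK} once more delivers disjointness from the forbidden interval of $J_\tau$. In every subrange up to this point, any $J \in G_g$ other than the unique element containing $B_{n_{j+1}}$ meets $B_{n_{j+1}}$ at most in a single boundary point, so the open forbidden interval is trivially avoided.

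The remaining range $\tilde{g}-1 \leq g \leq g_{j+1}-2$ I handle by the same template as Corollary \ref{cor:case1cor}. Let $J_\tau \in G_g$ be the unique element containing $B_{n_{j+1}}$; nestedness forces $J_\tau \subset J_\sigma$. Choose $K$ with $\partial^\ell J_\tau + \tfrac{1}{d_2}|J_\tau| \in [J_{\tau K})$; Lemma \ref{lem:1overbK} and Corollary \ref{cor:kfromzeta} give $K-1 \geq (C_5/d_2)^{-\gamma} - 2$, and then Proposition \ref{prop:C5} yields $\left|\bigcup_{i=K-1}^\infty J_{\tau i}\right| \leq 2^{1/\gamma} C_5^2 d_2^{-1}|J_\tau|$. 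The key size estimate $|B_{n_{j+1}}| > 2^{1/\gamma} C_5^2 d_2^{-1}|J_\tau|$ follows from the chain $|B_{n_{j+1}}| \geq \beta|A_{n_j}| \geq \beta|J_{\sigma 1}| \geq \beta C_5^{-1}|J_\sigma| \geq \beta C_5^{-1}|J_\tau|$ (the last step because $J_\tau \subset J_\sigma$), combined with the arithmetic consequence of $d_2 > 2^{1+2/\gamma}C_5^4(\alpha\beta)^{-1}$ that $\beta C_5^{-1} > 2^{1/\gamma}C_5^2 d_2^{-1}$. The finish is verbatim Corollary \ref{cor:case1cor}: placing $\partial^\ell B_{n_{j+1}}$ in the forbidden interval would force $B_{n_{j+1}}$ to contain $J_{\tau(K-1)} \in G_{g+1}$, contradicting commensurability with $g_{j+1} > g+1$. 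The main bookkeeping hurdle will be verifying that the ancestors of $A_{n_j}$ across generations $g_j, \dots, \tilde{g}-1$ are precisely the iterated rightmost branches $J_{\kappa 1^k}$, which is what lets Lemma \ref{lem:1overbK} apply throughout the middle range; Lemma \ref{lem:accum} then also ensures that the only way $B_{n_{j+1}}$ can meet a forbidden interval is via its own left endpoint, which is exactly the scenario the Case 1 calculation rules out.
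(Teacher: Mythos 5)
Your proposal is correct and follows essentially the same route as the paper's proof: reduce to $g \geq g_j-1$ via $P_3(j)$, handle $g = g_j-1$ through $A_{n_j} \subset J_{\kappa 1}$ and Lemma \ref{lem:1overbK}, split $g_j \leq g \leq g_{j+1}-2$ at $\tilde{g}$ into a ``rightmost-branch ancestor'' range settled by Lemma \ref{lem:1overbK} and a final range settled by the Corollary \ref{cor:case1cor} template with the size chain $|B_{n_{j+1}}| \geq \beta |J_{\kappa 1 \tilde{\kappa} 1}| \geq \beta C_5^{-1}|J_{\kappa 1 \tilde{\kappa}}| \geq \beta C_5^{-1}|J| > 2^{1/\gamma}C_5^2 d_2^{-1}|J|$. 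Your phrasing of the last range via direct nestedness $J_\tau \subset J_{\kappa 1 \tilde{\kappa}}$ is only a cosmetic variant of the paper's argument.
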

\begin{proof}
	$P_3 ( j )$ is true by the induction hypothesis; therefore it suffices to consider $J \in \bigcup_{g=g_j-1}^{g_{j+1}-2} G_g$.  Also $B_{n_{j+1}} \subset A_{n_j} \subset J_{\kappa 1}$, $J_{\kappa 1}$ is disjoint from $\big( \partial^\ell I, \partial^\ell I + d_2^{-1} | I | \big)$ by Lemma \ref{lem:1overbK}, and $I$ is the only element of $G_{g_j-1}$ that intersects $\big( A_{n_j} \big)^\circ$.  So it suffices to consider $J \in \bigcup_{g=g_j}^{g_{j+1}-2} G_g$.
	
	Fix such a $J \in G_g$, where $g_j \leq g \leq g_{j+1}-2$.  By Corollary \ref{cor:gnminus2}, $B_{n_{j+1}}$ is properly contained in a unique element of $G_{g_{j+1}-2}$.  By Observation \ref{obs1}, this element has a unique ancestor $J' \in G_g$.  If $J \neq J'$, then $B_{n_{j+1}}$ is disjoint from the interior of $J$ and we are done.  So suppose $J = J'$.
	
	Let $J^\ast := J_{\kappa 1 \tilde{\kappa}} \in G_{\tilde{g}-1}$.  We consider two cases, the first of which is potentially vacuous.
	
	\textit{Case A: $g_j \leq g \leq \tilde{g}-2$.}  Since $J$ contains $B_{n_{j+1}}$ and $B_{n_{j+1}} \subset J^\ast$, Observation \ref{obs1} implies that $J$ is the unique ancestor of $J^\ast$ in $G_g$.  Hence $J = J_{\kappa 1 \kappa'}$, where $\kappa'$ is a string of $g-g_j$ repeating ones.  Since
	\[
	| \kappa' | = g-g_j \leq \tilde{g}-g_j-2 < \tilde{g}-g_j-1 = | \tilde{\kappa} |,
	\]
	we have $J^\ast \subset J_{\kappa 1 \kappa' 1}$.  On the other hand, Lemma \ref{lem:1overbK} gives
	\[
	\big( \partial^\ell J, \partial^\ell J + d_2^{-1} | J | \big) \subset \bigcup_{i=3}^\infty J_{\kappa 1 \kappa' i}.
	\]
	Thus $B_{n_{j+1}} \subset J^\ast$ is disjoint from $\big( \partial^\ell J, \partial^\ell J + d_2^{-1} | J | \big)$.
	
	\textit{Case B: $\tilde{g}-1 \leq g \leq g_{j+1}-2$.}  Write $J = J_\sigma$.  Since $J$ contains $B_{n_{j+1}}$ and $B_{n_{j+1}} \subset J^\ast$, Observation \ref{obs2} implies that $J \subset J^\ast$ or $J=J^\ast$.  In either case, $|J| \leq |J^\ast|$.
	
	Find the unique $K$ such that
	\[
	\partial^\ell J + d_2^{-1} | J | \in [ J_{\sigma K} ).
	\]
	By Lemma \ref{lem:1overbK}, $K-1 \geq 2$, and by Corollary \ref{cor:kfromzeta}, $K-1 \geq \big( C_5 d_2^{-1}\big)^{-\gamma} - 2$.  Thus, using Proposition \ref{prop:C5},
	\begin{align*}
		\frac{\big| \bigcup_{i=K-1}^\infty J_{\sigma i} \big|}{| J |} 
		&\leq C_5 ( K-1 )^{-1/\gamma} 
		\leq C_5 \bigg( \bigg( \frac{C_5}{d_2} \bigg)^{-\gamma} -2 \bigg)^{-1/\gamma} \\
		&\leq C_5 \bigg( \frac{1}{2} \bigg( \frac{C_5}{d_2} \bigg)^{-\gamma} \bigg)^{-1/\gamma} 
		= \frac{2^{1/\gamma} C_5^2}{d_2}.
	\end{align*}
	Also, since $|J| \leq |J^\ast|$, Proposition \ref{prop:C5} gives
	\begin{equation*}
		\frac{\big| B_{n_{j+1}} \big|}{| J |} 
		\geq \beta \frac{\big| A_{n_j} \big|}{| J |} 
		\geq \beta \frac{| J_{\kappa 1 \tilde{\kappa} 1} |}{| J^\ast |} 
		\geq \beta C_5^{-1} 
		> \frac{2^{1/\gamma} C_5^2}{d_2}.
	\end{equation*}
	Therefore, if $\partial^\ell B_{n_{j+1}}$ were contained in $\big[ \partial^\ell J, \partial^\ell J + d_2^{-1} | J | \big)$, then $B_{n_{j+1}}$ would contain $J_{\sigma ( K-1 )} \in G_{g+1}$.  But this is not possible because $B_{n_{j+1}}$ is c.w.g.\ $g_{j+1} > g+1$.
\end{proof}

In conclusion, $P_1 ( j+1 )$ is true by construction, Lemma \ref{lem:case2lemsize} is the statement $P_2 ( j+1 )$, and Lemma \ref{lem:case2lem} is the statement $P_3 ( j+1 )$.  This completes the analysis of Case 2.  The induction argument is complete, and with it, the proof of Theorem \ref{thm:maintheoremF}.

\section{Proof that $\mathcal{E}_f$ is strong winning (Theorem \ref{thm:maintheoremf})} \label{sec:corollary}
Let $\mathcal{E}_F$ be $\alpha_F$-strong winning (so $0 < \alpha_F \leq \frac{1}{2}$). Recall the constant $C_2 > 1$ provided by Theorem \ref{thm:youngdist} and define
\begin{equation*}
	\alpha_f := \exp ( -C_2 ) \alpha_F \in \big( 0, \tfrac{1}{2} \big).
\end{equation*}
Let $\beta_f \in ( 0, 1 )$ be arbitrary. We now show that $\mathcal{E}_f$ is $(\alpha_f,\beta_f)$-strong winning.  Define
\begin{equation*}
	\beta_F := \exp ( -C_2 ) \beta_f \in (0,1).
\end{equation*}
We set up two $( \alpha, \beta )$ games. Alice and Bob will play the primary $( \alpha_f, \beta_f )$ game on $( [ 0,1 ], \mathcal{E}_f )$, and Alicia and Bobby will play an auxiliary $( \alpha_F, \beta_F )$ game on $( [ r_1,1 ], \mathcal{E}_F )$.

The main game begins as Bob chooses $B_1 \subset [ 0,1 ]$.  Alice chooses $A_1$ such that $0 \notin A_1$.  Bob chooses $B_2$.  Alice plays arbitrarily until Bob chooses an interval that is contained in some $[ r_{n+1}, r_n ]$.  This will eventually happen for the following reason.  There are finitely many intervals $[ r_{n+1}, r_n ]$ that intersect $B_2$ (because $0 \notin B_2$), and Alice can force $| B_n | \searrow 0$ by always choosing an interval $A_n$ of length $\alpha_f | B_n |$.  Furthermore $\alpha_f < \frac{1}{2}$ and so Alice may always choose $A_n$ so as to avoid any given point in $B_n$.  After relabeling we may therefore assume without loss of generality that $B_1 \subset [ r_{N+1}, r_N ]$ for some $N \in \mathbb{N}$.

The auxiliary game begins as Bobby chooses $B_1' = f^N ( B_1 ) \subset [ r_1,1 ]$.  Alicia, as part of her winning strategy, chooses $A_1' \subset B_1'$.  Define $A_1 = f^{-N} ( A_1' ) \cap [ r_{N+1}, r_N ] \subset B_1$.  By the mean-value theorem there exist $\xi,\xi' \in B_1^\circ$ such that, using Theorem \ref{thm:youngdist},
\begin{equation*}
\frac{| A_1 |}{| B_1 |} = \frac{| A_1' |/ ( f^N )'( \xi )}{| B_1' | / ( f^N )'( \xi' )} \geq \exp \bigg( -\frac{C_2}{r_0-r_1} | f^N \xi - f^N \xi' | \bigg) \alpha_F \geq \alpha_f.
\end{equation*}
Thus $A_1$ is a permissible interval for Alice to choose; she does so.

Suppose the four players have chosen intervals $\{ A_i, B_i, A_i', B_i'  \}_{i=1}^k$ for some $k \in \mathbb{N}$ in such a way that $f^N ( B_k ) = B_k'$ and $A_k = f^{-N} ( A_k' ) \cap [ r_{N+1}, r_N ]$, and the sets $\{ A_k' \}$ are chosen as part of Alicia's winning strategy.  Bob chooses $B_{k+1} \subset A_k$.  Define $B_{k+1}' = f^N ( B_{k+1} ) \subset A_k'$.  By the mean-value theorem there exist $\eta, \eta' \in A_k^\circ$ such that, again using Theorem \ref{thm:youngdist},
\begin{equation*}
\frac{| B_{k+1}' |}{| A_k' |} = \frac{| B_{k+1} |}{| A_k |} \frac{( f^N )'( \eta )}{( f^N )'( \eta' )} \geq \exp \bigg( -\frac{C_2}{r_0-r_1} | f^N \eta - f^N \eta' | \bigg) \beta_f \geq \beta_F.
\end{equation*}
Thus $B_{k+1}'$ is a permissible interval for Bobby to choose; he does so.  Alicia, as part of her winning strategy, chooses $A_{k+1}' \subset B_{k+1}'$.  Define $A_{k+1} = f^{-N} ( A_{k+1}' ) \cap [ r_{N+1}, r_N ] \subset B_{k+1}$.  By the mean-value theorem there exist $\upsilon,\upsilon' \in B_{k+1}^\circ$ such that, using Theorem \ref{thm:youngdist} once more,
\begin{equation*}
\frac{| A_{k+1} |}{| B_{k+1} |} = \frac{| A_{k+1}' | / ( f^N )'( \upsilon )}{| B_{k+1}' | / ( f^N )' ( \upsilon' )} \geq \exp \bigg( -\frac{C_2}{r_0-r_1} | f^N \upsilon - f^N \upsilon' | \bigg) \alpha_F \geq \alpha_f.
\end{equation*}
Thus $A_{k+1}$ is a permissible interval for Alice to choose; she does so.

Both games are now complete.  Define $\{ \omega \} = \bigcap_{k=1}^\infty B_k$ and $\{ \omega' \} = \bigcap_{k=1}^\infty B_k'$.  By construction, Alicia wins; thus there exists $L \in \mathbb{N}$ such that the orbit of $\omega'$ under $F$ stays outside the interval $[ r_1, p_L )$.  Define $M := 2 + \max \{ L,N \}$.  We claim that the orbit of $\omega$ under $f$ stays outside the interval $[ 0, r_M )$.

Suppose otherwise.  Write $\omega' \in J_{m_1 m_2 \dots}$ and let
\begin{equation*}
\tau := \min \{ t \in \mathbb{N} \cup \{ 0 \} \colon f^t \omega \in [ 0, r_M ) \}.
\end{equation*}
Because $M > N+1$ and $\omega \in [ r_{N+1},r_N ]$ we have $\tau > N$.  Find $j \geq 0$ and $0 \leq s < m_{j+1}$ such that
\begin{equation*}
\tau = N + m_1 + \dots + m_j +s.
\end{equation*}
Because the orbit of $\omega'$ under $F$ avoids $[ r_1, p_L )$ we have that $m_i \leq L < M$ for all $i$.  Therefore
\begin{equation*}
F^{j+1} \omega' = f^{m_1 + \dots m_{j+1} + N} \omega = f^{m_{j+1}-s} ( f^\tau \omega ) \in [ 0, r_{M-m_{j+1}+s} ) \subset [ 0,r_1 ).
\end{equation*}
But $F^{j+1} \omega' \in J_{m_{j+2}} \subset [ r_1,1 ]$, a contradiction. So $\omega \in \mathcal{E}_f$ and $\mathcal{E}_f$ is $(\alpha_f,\beta_f)$-strong winning. As $\beta_f$ was arbitrary, this shows that $\mathcal{E}_f$ is $\alpha_f$-strong winning.

\section{The case when $f$ is decreasing on $(r_1,1]$} \label{sec:decreasing}
As we mentioned in the introduction, we have assumed throughout this article that $f$ is increasing on $(r_1,1]$ for the sake of simplicity.  Now suppose that $f$ is decreasing on $(r_1,1]$.  The Markov structure of $F$ is now more complicated than in the increasing case.  In particular, an easy induction argument shows that the ordering of the basic intervals of $F$ of odd generation numbers is reversed.  More formally, if $m_1 \dots m_n < m_1^\ast \dots m_n^\ast$ in the lexicographic ordering and $n$ is even, we have
\begin{equation*}
\partial^\ell J_{m_1 \dots m_n} > \partial^\ell J_{m_1^\ast \dots m_n^\ast}
\end{equation*}
as before; but if $n$ is odd, we have instead
\begin{equation*}
\partial^\ell J_{m_1 \dots m_n} < \partial^\ell J_{m_1^\ast \dots m_n^\ast}.
\end{equation*}

The following changes are necessary to handle this added complexity.  Intervals $[a,b]$ for which $a>b$ should be interpreted to mean $[b,a]$.  All derivative estimates should be interpreted using absolute values.  In Definition \ref{def:pn} we have $p_n \nearrow 1$ instead of $p_n \searrow r_1$.  In Definition \ref{def:basicinterval} one now has $G_1 = \{ [r_1,p_1], [p_1,p_2], \dots \}$.  The last sentence of Definition \ref{def:Jsigma} should reflect the reversed ordering.  Definition \ref{def:leftend} needs to be expanded to include right endpoints of generation $n$.  If $J_\sigma$ is a basic interval of odd generation number, all expressions of the form $\partial^\ell J_\sigma + \zeta \lvert J_\sigma \rvert$ must be replaced with $\partial^r J_\sigma - \zeta \lvert J_\sigma \rvert$.  Expressions such as $[J_\sigma)$ become $(J_\sigma]$, with the obvious modifications to similar half-open intervals.  The properties of basic intervals listed after Definition \ref{def:commensurability} need obvious changes when the generation number is odd.  In Lemma \ref{lem:accum}, interchange the words ``left'' and ``right''.

Finally, recall that the induction step of the proof of Theorem \ref{thm:maintheoremF} appearing in \S\ref{subsec:inductionstep} is divided into two main cases, depending on whether the right half of Bob's chosen interval $B_{n_j}$ contains a left endpoint of generation at most $g_j-1$.  If $g_j$ is odd, no change is necessary, but if $g_j$ is even, the statement of Case 1 should now read: ``The interval $\big[ \partial^\ell B_{n_j}, B_{n_j}^\textrm{mid} \big)$ does not contain a right endpoint of generation at most $g_j-1$''.  Case 2 handles the complementary situation.  After these changes, the proof of the induction step is identical to the increasing case: all uses of adjacency, commensurability, and the estimates in Proposition \ref{prop:C5} are applied at the endpoint toward which the relevant descendants accumulate.  Thus no new estimate is required.

\section*{Acknowledgments}
The author is indebted to the referees for their valuable suggestions.  In addition the author would like to thank his Ph.D. advisor, Vaughn Climenhaga, for his infinite patience and wisdom.  This work is partially supported by NSF grant DMS-1554794.

\bibliography{MP}

\begin{thebibliography}{10}

\bibitem{MR2818688}
R.~Broderick, L.~Fishman, and D.~Kleinbock, ``Schmidt's game, fractals, and
  orbits of toral endomorphisms,'' {\em Ergodic Theory Dynam. Systems},
  vol.~31, no.~4, pp.~1095--1107, 2011.

\bibitem{MR980795}
S.~G. Dani, ``On orbits of endomorphisms of tori and the {S}chmidt game,'' {\em
  Ergodic Theory Dynam. Systems}, vol.~8, no.~4, pp.~523--529, 1988.

\bibitem{MR2660561}
D.~F\"{a}rm, T.~Persson, and J.~Schmeling, ``Dimension of countable
  intersections of some sets arising in expansions in non-integer bases,'' {\em
  Fund. Math.}, vol.~209, no.~2, pp.~157--176, 2010.

\bibitem{MR3206688}
H.~Hu and Y.~Yu, ``On {S}chmidt's game and the set of points with non-dense
  orbits under a class of expanding maps,'' {\em J. Math. Anal. Appl.},
  vol.~418, no.~2, pp.~906--920, 2014.

\bibitem{MR195595}
W.~M. Schmidt, ``On badly approximable numbers and certain games,'' {\em Trans.
  Amer. Math. Soc.}, vol.~123, pp.~178--199, 1966.

\bibitem{MR2480100}
J.~Tseng, ``Schmidt games and {M}arkov partitions,'' {\em Nonlinearity},
  vol.~22, no.~3, pp.~525--543, 2009.

\bibitem{MR2505322}
K.~Gelfert and M.~Rams, ``The {L}yapunov spectrum of some parabolic systems,''
  {\em Ergodic Theory Dynam. Systems}, vol.~29, no.~3, pp.~919--940, 2009.

\bibitem{MR1764931}
K.~Nakaishi, ``Multifractal formalism for some parabolic maps,'' {\em Ergodic
  Theory Dynam. Systems}, vol.~20, no.~3, pp.~843--857, 2000.

\bibitem{MR599464}
M.~Thaler, ``Estimates of the invariant densities of endomorphisms with
  indifferent fixed points,'' {\em Israel J. Math.}, vol.~37, no.~4,
  pp.~303--314, 1980.

\bibitem{MR3021798}
B.~Mance and J.~Tseng, ``Bounded {L}\"{u}roth expansions: applying {S}chmidt
  games where infinite distortion exists,'' {\em Acta Arith.}, vol.~158, no.~1,
  pp.~33--47, 2013.

\bibitem{MR2720230}
C.~T. McMullen, ``Winning sets, quasiconformal maps and {D}iophantine
  approximation,'' {\em Geom. Funct. Anal.}, vol.~20, no.~3, pp.~726--740,
  2010.

\bibitem{MR1750438}
L.-S. Young, ``Recurrence times and rates of mixing,'' {\em Israel J. Math.},
  vol.~110, pp.~153--188, 1999.

\end{thebibliography}
\bibliographystyle{alpha}

\end{document}